\newtheorem{theorem}{Theorem}[section]
\newtheorem{lemma}[theorem]{Lemma}
\newtheorem{proposition}[theorem]{Proposition}
\theoremstyle{definition}
\theoremstyle{remark}
\newtheorem{remark}[theorem]{Remark}
\numberwithin{equation}{section}
    \newcommand{\hf}{\frac{1}{2}}
    \newcommand{\nabh}{\nabla_{\! h}}
    \newcommand{\nrm}[1]{\left\| #1 \right\|}
    \def\0{\mbox{\boldmath $0$}}
\newcommand{\eipx}[2]{\left[ #1 , #2 \right]_{\rm x}}
\newcommand{\eipy}[2]{\left[ #1 , #2 \right]_{\rm y}}
\newcommand{\eipz}[2]{\left[ #1 , #2 \right]_{\rm z}}
\begin{document}

\title[Global-in-time energy stability analysis for ETDRK scheme]
{Global-in-time energy stability analysis for the exponential time differencing Runge--Kutta scheme for the phase field crystal equation}


\author{Xiao Li}
\address{Key Laboratory of Mathematics and Complex Systems, Ministry of Education and School of Mathematical Sciences, Beijing Normal University, Beijing 100875, China}
\email{lixiao@bnu.edu.cn}
\thanks{}

\author{Zhonghua Qiao}
\address{Department of Applied Mathematics, The Hong Kong Polytechnic University, Hung Hom, Kowloon, Hong Kong}
\email{zqiao@polyu.edu.hk}
\thanks{The second author was partially supported by the Hong Kong Research Council GRF grants 15325816 and 15300417.}

\author{Cheng Wang}
\address{Mathematics Department, The University of Massachusetts, North Dartmouth, MA 02747, USA}
\email{cwang1@umassd.edu}
\thanks{The third author was partially supported by NSF grants DMS-2012269 and DMS-2309548.}

\author{Nan Zheng}
\address{Department of Applied Mathematics, The Hong Kong Polytechnic University, Hung Hom, Kowloon, Hong Kong}
\email{znan2017@163.com}
\thanks{}

\subjclass[2020]{Primary 35K30, 35K55, 65M06, 65M12, 65T40}

\date{}

\dedicatory{}

\keywords{Phase field crystal equation,
exponential time differencing,
Runge--Kutta numerical scheme,
energy stability,
eigenvalue estimate}

\begin{abstract}
The global-in-time energy estimate is derived for the second-order accurate exponential time differencing Runge--Kutta (ETDRK2) numerical scheme to the phase field crystal (PFC) equation, a sixth-order parabolic equation modeling crystal evolution. 
To recover the value of stabilization constant, some local-in-time convergence analysis has been reported, and the energy stability becomes available over a fixed final time. In this work, we develop a global-in-time energy estimate for the ETDRK2 numerical scheme to the PFC equation by showing the energy dissipation property for
any final time. An \textit{a~priori} assumption at the previous time step, combined with a single-step $H^2$ estimate of the numerical solution, is the key point in the analysis. Such an $H^2$ estimate recovers the maximum norm bound of the numerical solution at the next time step, and then the value of the stabilization parameter can be theoretically justified. This justification ensures the energy dissipation at the next time step, so that the mathematical induction can be effectively applied, by then the global-in-time energy estimate is accomplished. This paper represents the first effort to theoretically establish a global-in-time energy stability analysis for a second-order stabilized numerical scheme in terms of the original free energy functional. The presented methodology is expected to be available for many other Runge--Kutta numerical schemes to the gradient flow equations.
\end{abstract}

\maketitle


\section{Introduction}
	
The phase field crystal (PFC) equation has become a very powerful model to describe crystal dynamics at the atomic scale in space and on diffusive scales in time~\cite{elder02}. The elastic and plastic deformations, as well as multiple crystal orientations and defects, have been appropriately incorporated in this approach. This physical model has been widely used in the numerical simulation of many related microstructures~\cite{provatas07}, such as epitaxial thin film growth~\cite{elder04}, grain growth~\cite{stefanovic06}, eutectic solidification~\cite{elder07}, and dislocation formation and motion~\cite{stefanovic06}, etc. The PFC equation is a gradient flow model, and the phase variable stands for a coarse-grained temporal average of the number density of atoms; also see the related derivation of dynamic density functional theory \cite{backofen07, marconi99, provatas10}.  
In the PFC formulation, $u: \Omega\subset \mathbb{R}^3\rightarrow \mathbb{R}$ is the atom density variable, and the free energy is given by~\cite{elder02, elder04, swift77}
	\begin{equation*}
E (u) = \int_\Omega \Big( \frac{1}{4} u^4 +\frac{1-\varepsilon}{2} u^2 - | \nabla u |^2 +\frac{1}{2} (\Delta u)^2 \Big) \, \mathrm{d}\bm{x}  ,
	\end{equation*}
in which the parameter $0 < \varepsilon < 1$ measures a deviation from the melting temperature. 
Moreover, we assume a periodic boundary condition for the sake of brevity, and an extension to the case of homogeneous Neumann boundary condition is straightforward.

Meanwhile, an equivalent free energy functional is used to simplify the analysis~\cite{shin2016}:
\begin{equation*}
  E (u) = \int_\Omega \, \Big( \frac14 u^4 - \frac{\varepsilon}{2} u^2 + \frac12 ( ( I + \Delta ) u )^2 \Big) \, \mathrm{d}\bm{x} .
\end{equation*}
The PFC equation becomes the associated $H^{-1}$ gradient flow of the free energy, i.e.,
\begin{equation}
  u_t = \Delta \mu ,  \quad \mu:= \delta_u E = u^3 - \varepsilon u + ( I + \Delta )^2 u .
  \label{equation-PFC-1}
\end{equation}

Many numerical works have been reported for the PFC equation in the existing literature. Of course, a theoretical justification of energy stability has always been used as a mathematical check for a numerical scheme to gradient flows, since it plays a crucial role in the long time simulation. There have been extensive works of energy stability and convergence analysis for various numerical schemes to the PFC equation, as well as the modified PFC and square PFC models, including the first-order algorithms~\cite{wang10c, wang11a, wise09} and the second-order accurate ones~\cite{baskaran13a, baskaran13b, cheng2019d, dong18, hu09, WangM2021}, etc.

On the other hand, it is observed that, most energy stable numerical schemes for the PFC equation~\eqref{equation-PFC-1} involve an implicit treatment of the nonlinear term, which comes from the convexity structure of the free energy functional. Such an implicit treatment leads to a nonlinear numerical solver, which makes the implementation process very challenging. In addition, most existing works on second- and higher-order accurate schemes for the PFC equation and the modified models correspond to a multi-step algorithm, so that the reported energy stability is in terms of a modified energy functional, which is the original free energy combined with a few numerical correction terms. Such a modified energy estimate leads to a uniform-in-time bound for the original energy functional, while the original energy dissipation property has not been theoretically justified.

To obtain the stability estimate for the original energy functional, some Runge--Kutta (RK) numerical approaches have attracted increasing attention in recent years. For example, a combination of the convex splitting technique and the implicit-explicit (IMEX) RK idea leads to a convex splitting RK (CSRK) framework for gradient flow equations~\cite{Shin2017} based on a  resemblance condition. Such a CSRK framework gives a three-stage, second-order accurate nonlinear implicit scheme with a dissipation property  for the original energy. In practical computations, this three-stage RK numerical algorithm leads to three nonlinear solvers at each time step, making it even more expensive than the multi-step nonlinear numerical schemes~\cite{dong18, hu09}, in which only one nonlinear solver is needed. As an alternate approach, a linear IMEX-RK scheme is proposed in~\cite{Fu2022}, in which linear stabilization terms are used in the numerical design, and   the unconditional energy stability is proved under a global Lipschitz condition assumption. However, a theoretical justification of such a global Lipschitz condition has not been available due to the lack of estimates for the numerical solution in the maximum norm, particularly when a nonlinear term appears in the gradient flow equation. 

Meanwhile, the exponential time differencing (ETD)-based numerical approach has been another popular effort to solve nonlinear parabolic PDEs, in which an exact integration of the linear and positive definite part of the PDE is used, combined with certain explicit approximations to the temporal integral of the nonlinear and concave terms~\cite{Suli14, Beylkin98, Cox02, DaiH2023, Hochbruck10, Hochbruck11, Ju14, Ju15a, Ju15b, WangX16, Zhu16}. The energy stability analysis has been reported for a few multi-step ETD schemes~\cite{chen20a, chen20b, cheng2019c, Ju18} in their applications to various gradient flow models, while such a stability analysis has always been associated with a modified energy because of the multi-step nature. More recently, a second-order accurate ETD Runge--Kutta (ETDRK2) numerical scheme is studied for the PFC equation~\eqref{equation-PFC-1} in \cite{LiX2023c}. In this numerical approach, the right-hand side is decomposed into two parts: the stabilized diffusion part $L_\kappa$, consisting of the physical diffusion and artificial diffusion terms, while the nonlinear and the concave artificial terms are combined as the remaining part $f_\kappa$. Subsequently, an exact ETD integration is applied to the stabilized diffusion part $L_\kappa$, and a  specific explicit update of the nonlinear part $f_\kappa$ is used 
  to ensure the desired accuracy order is satisfied. For such an ETDRK2 numerical scheme, a careful estimate reveals an energy stability in terms of the original free energy functional (in comparison with the modified energy stability for many multi-step numerical schemes) under the condition of  a global Lipschitz constant. In turn, the artificial regularization parameter $\kappa$ is required to be greater than a constant, dependent on the maximum norms of the numerical solution at the previous and current time steps and at the intermediate time stage.

In the existing work~\cite{LiX2023c}, a local-in-time convergence analysis is performed for the ETDRK2 numerical scheme in the $\ell^\infty (0, T; \ell^\infty)$ norm, so that the distance between the exact and numerical solutions stays bounded for a fixed final time. Then, the $\ell^\infty$ bound of the numerical solution can be derived by the associated bound of the exact solution plus a fixed constant, as long as the convergence estimate is valid. With such an $\ell^\infty$ bound for the numerical solution, a theoretical analysis of the energy stability forms a close argument. On the other hand, such an energy stability analysis is only local-in-time, since all the error estimates for a nonlinear PDE have always contained a convergence constant of the form $\mathrm{e}^{C T}$. In turn, such a convergence constant has an exponential growth as the final time becomes larger, and a theoretical justification of the distance between the exact and numerical solutions is   no longer valid for a fixed time step size and spatial mesh in the  long-time simulation.

In this article, we provide a global-in-time energy estimate for the proposed ETDRK2 scheme to the PFC equation~\eqref{equation-PFC-1}, where the energy dissipation property is valid for any final time. Based on the established result, the key point of such an analysis is to derive a uniform-in-time bound for the numerical solution in the maximum norm. To this end, we make an \textit{a~priori} assumption of   decreasing energy at the previous time step, so that the discrete $H^2$ and $\ell^\infty$ bounds of the numerical solution become available at the current step. Subsequently, the numerical system at the intermediate time stage and the next time step is carefully analyzed, which leads to a single-step $H^2$ estimate. More precisely, two sub-stages are formulated at each Runge--Kutta  stage, and nonlinear analysis in the Fourier pseudo-spectral space is undertaken, in which a careful eigenvalue estimate plays an important role. The derived  single-step $H^2$ estimate recovers the $\ell^\infty$ bounds of the numerical solution at the intermediate stage and the next time step, so that a theoretical justification of the artificial stabilization parameter  $\kappa$ becomes available. Such an evaluation of $\kappa$ ensures the energy dissipation at the next time step, so that the mathematical induction argument can be effectively applied, and thus the global-in-time energy estimate is accomplished.

In fact, the reported framework for the global-in-time energy stability estimate is expected to be applicable to a class of gradient flow models, such as Cahn--Hilliard equation, epitaxial thin film growth, and other related gradient equations with non-quadratic free energy expansion. This scientific idea can also be applied to a class of high-order Runge--Kutta numerical schemes, including the ETDRK, the exponential and exponential-free Runge--Kutta numerical algorithms with any accuracy order, as long as the energy stability can be proved under a condition of a global Lipschitz constant.
Moreover, for a wide class of Runge--Kutta numerical schemes for the gradient flow model,  the reported theoretical technique can aslo be applied to derive the uniform-in-time bound of the numerical solution under the associated functional norm (required by the global Lipschitz constant in the energy stability estimate), hence the global-in-time energy estimate can also be theoretically justified.

The rest of this paper is organized as follows. In Section \ref{sec:numerical scheme}, we review the ETDRK2 numerical scheme and present a few preliminary estimates. A global-in-time energy stability analysis is provided in Section~\ref{sec:energy stability}. 
Some concluding remarks are presented in Section \ref{sec:conclusion}.

\section{The numerical scheme and a few preliminary estimates} \label{sec:numerical scheme}

\subsection{The finite difference spatial discretization}  

The numerical approximation on the computational domain $\Omega = (0,L)^3$ is taken into consideration with periodic boundary condition. The standard centered finite difference approximation is applied with $\Delta x = \Delta y = \Delta z = h = \frac{L}{N}$, in which $N \in\mathbb{N}$ is the spatial mesh resolution. In particular, $f_{i,j,k}$ represents the numerical value of $f$ at the regular numerical mesh points $( i h, j h, k h )$, and the discrete space ${\mathcal C}_{\rm per}$ is introduced as
	\[
{\mathcal C}_{\rm per} := \left\{ f = (f_{i,j,k} ) \,|\,  f_{i,j,k} = f_{i+\alpha N,j+\beta N, k+\gamma N}, \ \forall \, i,j,k,\alpha,\beta,\gamma\in \mathbb{Z} \right\}.
	\]
In turn, the discrete difference operators are evaluated at $( (i + \frac12) h , j h , k h)$, $( i h, ( j+\frac12) h, k h)$ and $( i h , j h , (k+\frac12)h )$, respectively:
	\begin{align*}
& 
D_x f_{i+\hf,j,k} := \frac{1}{h} (f_{i+1,j,k} - f_{i,j,k}  ), \quad
D_y f_{i,j+\hf,k} := \frac{1}{h} (f_{i,j+1,k} - f_{i,j,k}  ) ,
	\\
& 
D_z f_{i,j,k+\hf} := \frac{1}{h} (f_{i,j,k+1} - f_{i,j,k}  ) .
	\end{align*}
For a vector function $\vec{f} = ( f^x , f^y , f^z)^T$ with $f^x$, $f^y$, $f^z$ evaluated at $( (i+\frac12)h, j h , k h)$, $( i h, (j+\frac12) h, k h)$, $( i h , j h , ( k+\frac12)h )$, respectively, the corresponding average and difference operators at the staggered mesh points are defined as follows:
	\begin{align*}
& a_x f^x_{i, j, k} := \frac{1}{2} \big(f^x_{i+\hf, j, k} + f^x_{i-\hf, j, k} \big),	 \quad
 d_x f^x_{i, j, k} := \frac{1}{h}\big(f^x_{i+\hf, j, k} - f^x_{i-\hf, j, k} \big),
          \\
 & a_y f^y_{i,j, k} := \frac{1}{2} \big(f^y_{i,j+\hf, k} + f^y_{i,j-\hf, k} \big),	 \quad
d_y f^y_{i,j, k} := \frac{1}{h} \big(f^y_{i,j+\hf, k} - f^y_{i,j-\hf, k} \big),
	\\
& a_z f^z_{i,j,k} := \frac{1}{2} \big(f^z_{i, j,k+\hf} + f^z_{i, j, k-\hf} \big),   \quad
 d_z f^z_{i,j, k} := \frac{1}{h} \big(f^z_{i, j,k+\hf} - f^z_{i, j,k-\hf} \big) .
	\end{align*}
In turn,  the discrete divergence turns out to be
\begin{equation*}
\nabla_h\cdot \big( \vec{f} \big)_{i,j,k} = ( d_x f^x )_{i,j,k}  + ( d_y f^y )_{i,j,k} + ( d_z f^z )_{i,j,k} .
\end{equation*}
In particular, if $\vec{f} = \nabla_h \phi = ( D_x \phi , D_y \phi, D_z \phi)^T$ for certain scalar grid function $\phi$, the corresponding divergence becomes
\begin{equation*}
  ( \Delta_h \phi )_{i,j,k} =  \nabla_h\cdot \big( \nabla_h \phi \big)_{i,j,k} = d_x\left( D_x \phi \right)_{i,j,k}  + d_y\left( D_y \phi \right)_{i,j,k} + d_z\left( D_z \phi \right)_{i,j,k} .
\end{equation*}

For two cell-centered grid functions $f$ and $g$, the discrete $L^2$ inner product and the associated $\ell^2$ norm are defined as
\begin{equation*}
 \left\langle f , g\right\rangle := h^3 \sum_{i,j,k=1}^N \, f_{i,j,k} g_{i,j,k} , \quad
    \| f \|_2 := (\left\langle f , f\right\rangle )^\frac12 .
\end{equation*}
In turn, the mean zero space is introduced as $\mathring{\mathcal C}_{\rm per}:=\big\{ f \in {\mathcal C}_{\rm per} \, \big| \, \overline{f} :=  \frac{1}{| \Omega|} \langle f , 1 \rangle = 0 \big\}$. Similarly, for two vector grid functions $\vec{f} = ( f^x , f^y , f^z )^T$ and $\vec{g} = ( g^x , g^y , g^z )^T$ with $f^x$ ($g^x$), $f^y$ ($g^y$), $f^z$ ($g^z$) evaluated at $( (i+\frac12)h, j h , (k+\frac12) h)$, $( i  h, (j+\frac12) h, k h)$, $( i h , j h , ( k+\frac12)h )$, respectively, the corresponding discrete inner product becomes
	\begin{align*}
	  &
   \langle \vec{f} , \vec{g} \rangle : = \eipx{f^x}{g^x}	+ \eipy{f^y}{g^y} + \eipz{f^z}{g^z}, 	
\\
  & \eipx{f^x}{g^x} := \langle a_x (f^x g^x) , 1 \rangle , \, \, \,
   \eipy{f^y}{g^y} := \langle a_y (f^y g^y) , 1 \rangle , \, \, \,
   \eipz{f^z}{g^z} := \langle a_z (f^z g^z) , 1 \rangle.
	\end{align*}	
In addition to the $\ell^2$ norm, the discrete maximum norm is introduced as  $\nrm{f}_\infty := \max_{1\le i,j,k\le N}\left| f_{i,j,k}\right|$.
Moreover, the discrete $H_h^1$ and $H_h^2$ norms are defined as 
\begin{align*}
  &
\nrm{ \nabla_h f}_2^2 : = \langle \nabh f , \nabh f \rangle = \eipx{D_x f}{D_x f} + \eipy{D_y f}{D_y f} +\eipz{D_z f}{D_z f},
\\
  &
  \nrm{f}_{H_h^1}^2 : =  \nrm{f}_2^2+ \nrm{ \nabla_h f}_2^2 ,  \quad
  \| f \|_{H_h^2}^2 :=  \| f \|_{H_h^1}^2 + \| \Delta_h f \|_2^2 .
\end{align*}
The summation-by-parts formulas are recalled in the following lemma whose detailed proof can be found in~\cite{guo16, wang11a, wise10, wise09}, etc.

	\begin{lemma}
	\label{lemma1} 
For any $\psi, \phi \in {\mathcal C}_{\rm per}$ and
any $\vec{f}$, the following summation-by-parts formulas are valid:
	\begin{align*}
  &
\langle \psi , \nabla_h\cdot\vec{f} \rangle = - \langle \nabla_h \psi ,  \vec{f} \rangle, \quad
 \langle \psi, \Delta_h \phi \rangle = - \langle \nabla_h \psi ,  \nabla_h\phi \rangle ,  \quad
 \langle  \psi, \Delta_h^2 \phi \rangle =  \langle \Delta_h \psi ,  \Delta_h \phi \rangle ,
\\
  &
  \langle \Delta_h \psi, \Delta_h^2 \phi \rangle = - \langle \nabla_h \psi ,
  \nabla_h \Delta_h^2 \phi \rangle ,  \quad
  \langle \Delta_h^3 \psi, \Delta_h^2 \phi \rangle = - \langle \nabla_h \Delta_h^2 \psi ,
  \nabla_h \Delta_h^2 \phi \rangle .
	\end{align*}
	\end{lemma}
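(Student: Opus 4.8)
The plan is to prove the five identities in a cascade: establish the discrete divergence identity first, extract the self-adjointness of $\Delta_h$ from it, and then obtain everything else by purely algebraic manipulation of the discrete inner products. So I would begin with $\langle \psi , \nabla_h\cdot\vec f\rangle = -\langle \nabla_h\psi , \vec f\rangle$. Decomposing $\nabla_h\cdot\vec f = d_x f^x + d_y f^y + d_z f^z$ and $\langle\nabla_h\psi,\vec f\rangle = \eipx{D_x\psi}{f^x} + \eipy{D_y\psi}{f^y} + \eipz{D_z\psi}{f^z}$, it suffices to verify $\langle\psi, d_x f^x\rangle = -\eipx{D_x\psi}{f^x}$ together with its $y$- and $z$-analogues. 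For the $x$-component one substitutes $d_x f^x_{i,j,k} = h^{-1}(f^x_{i+\frac12,j,k}-f^x_{i-\frac12,j,k})$, performs a one-dimensional shift of the summation index in $i$ (legitimate because all grid functions involved lie in ${\mathcal C}_{\rm per}$, so the boundary contributions cancel and none survive), and recognizes the result as $-h^3\sum (D_x\psi)_{i+\frac12,j,k} f^x_{i+\frac12,j,k}$; a further use of periodicity shows that $\langle a_x(D_x\psi\cdot f^x),1\rangle$ equals exactly this sum, so the averaging operator $a_x$ hidden in the definition of $\eipx{\cdot}{\cdot}$ introduces no discrepancy. The $y$- and $z$-components are identical after relabeling.

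Next, specializing $\vec f = \nabla_h\phi = (D_x\phi,D_y\phi,D_z\phi)^T$ and using $\Delta_h\phi = \nabla_h\cdot(\nabla_h\phi)$ turns the first identity into the second one, $\langle\psi,\Delta_h\phi\rangle = -\langle\nabla_h\psi,\nabla_h\phi\rangle$. Since the right-hand side is symmetric in $\psi$ and $\phi$, this at once yields the self-adjointness $\langle\psi,\Delta_h\phi\rangle = \langle\Delta_h\psi,\phi\rangle$, which is the workhorse for the rest. The third identity then follows by writing $\langle\psi,\Delta_h^2\phi\rangle = \langle\psi,\Delta_h(\Delta_h\phi)\rangle$ and moving one copy of $\Delta_h$ onto the first argument via self-adjointness, giving $\langle\Delta_h\psi,\Delta_h\phi\rangle$.

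For the fourth identity I would apply the second identity to the pair $(\Delta_h\psi,\Delta_h\phi)$ to get $\langle\Delta_h\psi,\Delta_h^2\phi\rangle = -\langle\nabla_h\Delta_h\psi,\nabla_h\Delta_h\phi\rangle$, and then push the Laplacian on the left factor back inward by the second identity again, arriving at $-\langle\nabla_h\psi,\nabla_h\Delta_h^2\phi\rangle$; equivalently, one checks that both sides equal $\langle\Delta_h\psi,\Delta_h^2\phi\rangle$ by iterated self-adjointness. The fifth identity is the same computation with $\Delta_h^2$ in place of $\Delta_h$ on each argument: $\langle\Delta_h^3\psi,\Delta_h^2\phi\rangle = \langle\Delta_h(\Delta_h^2\psi),\Delta_h^2\phi\rangle = -\langle\nabla_h\Delta_h^2\psi,\nabla_h\Delta_h^2\phi\rangle$ by the second identity applied to $(\Delta_h^2\psi,\Delta_h^2\phi)$. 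The only place genuine care is needed is the first identity, and even there the content is just the bookkeeping of the staggered grid — matching the half-integer indexing of $f^x$, the averaging operator inside $\eipx{\cdot}{\cdot}$, and the index shift, and confirming that it is exactly membership in ${\mathcal C}_{\rm per}$ that annihilates the boundary terms; after that, everything reduces to algebra with the self-adjoint operator $\Delta_h$, so I anticipate no real obstacle, consistent with the statement being standard and the full details being available in \cite{guo16, wang11a, wise10, wise09}.
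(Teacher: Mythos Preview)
Your proposal is correct. The paper itself does not prove this lemma at all: it simply recalls the formulas and cites \cite{guo16, wang11a, wise10, wise09} for the details, so your cascade argument (establish $\langle\psi,\nabla_h\!\cdot\!\vec f\rangle=-\langle\nabla_h\psi,\vec f\rangle$ via an index shift on the periodic grid, specialize to obtain $\langle\psi,\Delta_h\phi\rangle=-\langle\nabla_h\psi,\nabla_h\phi\rangle$ and hence self-adjointness of $\Delta_h$, then derive the remaining three identities algebraically) is exactly the standard route those references take and is more than the paper supplies.
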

	
In addition, the following $\phi$-functions are introduced to facilitate the numerical formulation:
\begin{equation}
  \phi_0 (a) = {\rm e}^{-a} , \quad
  \phi_1 (a) = \frac{1 - {\rm e}^{-a}}{a} , \quad
  \phi_2 (a) = \frac{a - ( 1 - {\rm e}^{-a})}{a^2} , \quad a > 0 . 
  \label{phi-1}
\end{equation}
The following result will be used in subsequent analysis, and its proof has been provided in~\cite{cheng2019c}.
\begin{lemma}  
\label{lem:lem 1}
{\rm (1)} $\phi_i (x)$ is decreasing, for $i=0, 1, 2$; 

{\rm (2)} $0 < \phi_1 (x) \le 1$, $0 < \phi_2 (x) \le \frac12$, and $0 < \frac{\phi_2 (x)}{\phi_1 (x)} \le 1$, 
 $\forall\, x > 0$.
\end{lemma}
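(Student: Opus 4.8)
The plan is to prove both parts from two elementary integral representations of the $\phi$-functions. First I would record
\[
\phi_1(a) = \int_0^1 {\rm e}^{-at}\,\mathrm{d}t , \qquad
\phi_2(a) = \int_0^1 (1-t)\,{\rm e}^{-at}\,\mathrm{d}t , \qquad a > 0 ,
\]
alongside the trivial $\phi_0(a) = {\rm e}^{-a}$. The first identity comes from the substitution $s = at$ in $\phi_1(a) = \frac1a \int_0^a {\rm e}^{-s}\,\mathrm{d}s$; the second from the Taylor expansion with integral remainder ${\rm e}^{-a} = 1 - a + \int_0^a (a-s)\,{\rm e}^{-s}\,\mathrm{d}s$, which yields $a^2 \phi_2(a) = \int_0^a (a-s)\,{\rm e}^{-s}\,\mathrm{d}s$, followed by the same change of variables.

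For part (1): the integrands ${\rm e}^{-at}$ and $(1-t)\,{\rm e}^{-at}$ are, for each fixed $t \in (0,1)$, strictly decreasing in $a$, with $(1-t) \ge 0$, so differentiating under the integral sign (legitimate on each compact $a$-interval) shows that $\phi_1$ and $\phi_2$ are strictly decreasing; $\phi_0$ is decreasing by inspection.

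For part (2): positivity of $\phi_1$ and $\phi_2$ is immediate from the non-negativity of the integrands. Since $\phi_1$ and $\phi_2$ are decreasing, it then suffices to evaluate the one-sided limits at the origin, $\lim_{a \to 0^+}\phi_1(a) = \int_0^1 1 \,\mathrm{d}t = 1$ and $\lim_{a \to 0^+}\phi_2(a) = \int_0^1 (1-t)\,\mathrm{d}t = \frac12$ (equivalently, one reads these off the power series $\phi_i(a) = \sum_{k \ge 0} \frac{(-a)^k}{(k+i)!}$), which gives $0 < \phi_1(a) \le 1$ and $0 < \phi_2(a) \le \frac12$. Finally, I would compute
\[
\phi_1(a) - \phi_2(a) = \int_0^1 t\,{\rm e}^{-at}\,\mathrm{d}t = \frac{1 - (1+a)\,{\rm e}^{-a}}{a^2} ,
\]
which is positive because $g(a) := 1 - (1+a)\,{\rm e}^{-a}$ satisfies $g(0) = 0$ and $g'(a) = a\,{\rm e}^{-a} > 0$ for $a > 0$; hence $0 < \phi_2(a)/\phi_1(a) < 1$, as claimed.

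There is no serious obstacle here; the only points requiring a word of care are the justification for differentiating under the integral sign (standard, via dominated convergence on compact $a$-intervals) and the identification of the boundary values at $a = 0^+$. An alternative route, avoiding integral representations altogether, is to analyze the signs of $\phi_i'$ directly, the key auxiliary fact being the inequality $(1+a)\,{\rm e}^{-a} < 1$ for all $a > 0$, which is precisely $g(a) > 0$ above.
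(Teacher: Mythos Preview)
Your proof is correct and self-contained. The paper does not actually prove this lemma; it merely cites an external reference (\cite{cheng2019c}) for the proof, so there is no in-paper argument to compare against. Your integral representations $\phi_1(a)=\int_0^1 e^{-at}\,\mathrm{d}t$ and $\phi_2(a)=\int_0^1(1-t)e^{-at}\,\mathrm{d}t$ make monotonicity, positivity, the boundary values at $a\to 0^+$, and the inequality $\phi_2<\phi_1$ all transparent, and they are more informative than a bare citation.
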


\subsection{The numerical scheme}

The space-discrete problem of~\eqref{equation-PFC-1} is to find $u: \, [0, +\infty) \to {\mathcal C}_{\rm per}$ that
\begin{equation}
  \frac{\mathrm{d} u}{\mathrm{d}t} = \Delta_h (u^3) - \varepsilon \Delta_h u + \Delta_h ( I + \Delta_h )^2 u .
  \label{space-discrete-1}
\end{equation}
Given a constant $\kappa >0$, we add and then subtract the term $\kappa \Delta_h u$ on the right-hand side of~\eqref{space-discrete-1} and rearrange the terms into linear and nonlinear parts:
\begin{equation}
  \frac{\mathrm{d}u}{\mathrm{d}t} = - L_\kappa u + f_\kappa (u) ,   \quad
  L_\kappa = - \Delta_h \big( ( I + \Delta_h )^2 + \kappa I \big) , \, \, \,
  f_\kappa (u) = \Delta_h ( u^3) - ( \varepsilon + \kappa ) \Delta_h u .
   \label{space-discrete-2}
\end{equation}
By the variation-of-constants formula, the exact solution to~\eqref{space-discrete-2}  is given by
\begin{equation*}
  u (t^{n+1}) = {\rm e}^{- \tau L_\kappa} u (t^n) + \int_0^\tau \, {\rm e}^{- (\tau -s) L_\kappa} f_\kappa (u (t^n +s)) \, \mathrm{d}s ,
\end{equation*}
where $\{t^n=n\tau\}_{n\ge0}$ with $\tau>0$ represents the set of nodes partitioning the time interval $[0,+\infty)$.
\begin{flushleft}
	The ETDRK2 scheme has been proposed in~\cite{LiX2023c}, consists of two stages at each time step:
\end{flushleft}
\begin{subequations}
\label{ETDRK2}
\begin{align}
  u^{n+1} & = \phi_0 (\tau L_\kappa) u^n + \tau \Big( ( \phi_1 (\tau L_\kappa) - \phi_2 (\tau L_\kappa)) f_\kappa (u^n) + \phi_2 (\tau L_\kappa) f_\kappa (\tilde{u}^{n+1})  \Big)  \label{ETDRK2-1}
\\
  & =
  \tilde{u}^{n+1} + \tau \phi_2 (\tau L_\kappa) ( f_\kappa (\tilde{u}^{n+1}) - f_\kappa (u^n) ) ,
  \nonumber
\\
  \tilde{u}^{n+1} & = \phi_0 (\tau L_\kappa) u^n + \tau \phi_1 (\tau L_\kappa) f_\kappa (u^n)  .
   \label{ETDRK2-2}
\end{align}
\end{subequations}
Meanwhile, a discrete version of the energy functional is defined as
\begin{equation*}
  E_h (u) = \frac14 \langle u^4 , 1 \rangle + \frac{1 - \varepsilon}{2} \| u \|_2^2
  - \| \nabla_h u \|_2^2 + \frac12 \| \Delta_h u \|_2^2 ,  \quad \forall \,u \in {\mathcal C}_{\rm per} .
\end{equation*}
For the ETDRK2 scheme~\eqref{ETDRK2}, the following result has been proved in \cite{LiX2023c}.

\begin{lemma} \cite{LiX2023c}
\label{thm: energy stab-1}
Under the condition that
\[
\kappa \ge \frac{3 M_0^2 - \varepsilon}{2}, ~ \textrm{  where } M_0 = \max ( \| u^n \|_\infty , \| \tilde{u}^{n+1} \|_\infty , \| u^{n+1} \|_\infty ),\]
the numerical solution $\left\{ u^n \right\}_{0 \le n \le N_T}$ generated by the ETDRK2 scheme~\eqref{ETDRK2} satisfies $E_h (u^{n+1}) \le E_h (u^n)$.
\end{lemma}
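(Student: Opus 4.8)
The plan is to recast the scheme so that the one-step increment $\delta u:=u^{n+1}-u^n$ is expressed through the discrete chemical potential acted on by positive, Fourier-diagonal operators built from the $\phi$-functions, then expand the discrete energy exactly, use the hypothesis on $\kappa$ to absorb the genuinely nonlinear part, and finally reduce the claim to a single quadratic-form inequality verifiable by a Fourier (eigenvalue) analysis. \textbf{Step 1 (incremental form).} Using $\phi_0(a)=1-a\phi_1(a)$, $a\phi_2(a)=1-\phi_1(a)$, the fact that $L_\kappa$, the $\phi_i(\tau L_\kappa)$ and $\Delta_h$ are all functions of $\Delta_h$ (hence commute and are simultaneously diagonalized in the discrete Fourier basis), and $f_\kappa(u)-L_\kappa u=\Delta_h\mu(u)$ with $\mu(u)=u^3-\varepsilon u+(I+\Delta_h)^2u$, I would rewrite the scheme as $\tilde u^{n+1}-u^n=-\mathcal A\mu^n$ and $\delta u=-\mathcal A\mu^n-\mathcal B r^n$, where $\mu^n:=\mu(u^n)$, $r^n:=g(\tilde u^{n+1})-g(u^n)$ with $g(u):=u^3-(\varepsilon+\kappa)u$ (so $f_\kappa=\Delta_h g$), $\mathcal A:=\tau\phi_1(\tau L_\kappa)(-\Delta_h)$ and $\mathcal B:=\tau\phi_2(\tau L_\kappa)(-\Delta_h)$. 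By Lemma~\ref{lem:lem 1}, $\mathcal A$ and $\mathcal B$ are symmetric positive semidefinite with $0\preceq\mathcal B\preceq\mathcal A$ on $\mathring{\mathcal C}_{\rm per}$ (from $0<\phi_2/\phi_1\le1$), and every increment above is mean zero.

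\textbf{Step 2 (energy expansion and absorption).} Since $E_h$ is a quartic, a direct Taylor expansion along the segment from $u^n$ to $u^{n+1}$ gives
\[
E_h(u^{n+1})-E_h(u^n)=\langle\mu^n,\delta u\rangle-\tfrac{\varepsilon}{2}\|\delta u\|_2^2+\tfrac12\|(I+\Delta_h)\delta u\|_2^2+\big\langle(\delta u)^2\big(\tfrac14(u^n+u^{n+1})^2+\tfrac12(u^n)^2\big),1\big\rangle .
\]
The last bracket is pointwise nonnegative and, since $\|u^n\|_\infty,\|u^{n+1}\|_\infty\le M_0$, is bounded by $\tfrac32 M_0^2\|\delta u\|_2^2$; together with the $-\tfrac{\varepsilon}{2}$ term and the hypothesis $\kappa\ge\frac{3M_0^2-\varepsilon}{2}$ this yields
\[
E_h(u^{n+1})-E_h(u^n)\le\langle\mu^n,\delta u\rangle+\tfrac12\big\langle\big((I+\Delta_h)^2+2\kappa I\big)\delta u,\delta u\big\rangle .
\]
Setting $\mathcal M:=(I+\Delta_h)^2+2\kappa I\succ0$ and completing the square, the right-hand side is $\le0$ if and only if $\|\mathcal M^{1/2}\delta u+\mathcal M^{-1/2}\mu^n\|_2\le\|\mathcal M^{-1/2}\mu^n\|_2$ (all quantities may be taken on $\mathring{\mathcal C}_{\rm per}$, since the constant component of $\mu^n$ is annihilated by $\mathcal A,\mathcal B$ and drops out of $\langle\mu^n,\delta u\rangle$). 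So it suffices to prove this norm inequality.

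\textbf{Step 3 (eigenvalue estimate).} Substituting $\delta u=-\mathcal A\mu^n-\mathcal B r^n$, the vector inside the norm is $-\mathcal M^{1/2}\mathcal B r^n-\mathcal M^{-1/2}(\mathcal M\mathcal A-I)\mu^n$. For the first-stage contribution alone ($r^n=0$) the inequality reduces, mode by mode, to $0\le(\beta_k^2+2\kappa)\alpha_k\le2$, with $\beta_k$ the eigenvalue of $I+\Delta_h$ and $\alpha_k=(1-{\rm e}^{-\tau\lambda_k})/(\beta_k^2+\kappa)$ that of $\mathcal A$ ($\lambda_k\ge0$ the eigenvalue of $L_\kappa$); this holds because $\alpha_k\le 1/(\beta_k^2+\kappa)$ (a consequence of $0<\phi_1\le1$) and $\beta_k^2+2\kappa\le2(\beta_k^2+\kappa)$, leaving a strictly negative dissipation. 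For the second-stage correction I would write, by the pointwise mean value theorem, $r^n=g'(\eta^n)(\tilde u^{n+1}-u^n)=-g'(\eta^n)\,\mathcal A\mu^n$ with a grid function $\eta^n$, $\|\eta^n\|_\infty\le M_0$, so the hypothesis gives the two-sided pointwise bound $-(\varepsilon+\kappa)\le g'(\eta^n)\le\kappa$; then I would combine this with the operator bounds $0\preceq\mathcal B\preceq\mathcal A$, $\|\mathcal M^{1/2}\mathcal B\|\le\sqrt{2/\kappa}$ and $\|\mathcal A v\|_2\le\sqrt{2/\kappa}\,\|\mathcal M^{-1/2}v\|_2$, all of which follow from the bounds on the $\phi$-functions in Lemma~\ref{lem:lem 1} (notably $0<\phi_1\le1$ and $0<\phi_2/\phi_1\le1$) together with $\alpha_k\le1/(\beta_k^2+\kappa)$.

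\textbf{Main obstacle.} The hard part is closing Step 3 for the full two-stage scheme. The first-stage estimate is already tight at the frequencies where $I+\Delta_h\approx0$ — there the dissipation weight degenerates as $\tau\to\infty$ — so a naive triangle-inequality treatment of the correction $\mathcal M^{1/2}\mathcal B r^n$ fails: its operator-norm bound is comparable to, and can exceed, $\|\mathcal M^{-1/2}\mu^n\|_2$, and the sign of $g'(\eta^n)$ (hence of the correction's contribution) varies pointwise, so it cannot simply be discarded. One must instead retain the precise algebraic relation between $\mathcal M^{1/2}\mathcal B r^n$ and the first-stage residual $\mathcal M^{-1/2}(\mathcal M\mathcal A-I)\mu^n$ — exploiting that $\mathcal B=(\phi_2/\phi_1)(\tau L_\kappa)\,\mathcal A$ and that $r^n$ is itself built from $\mathcal A\mu^n$ — and carry out a careful, frequency-resolved eigenvalue analysis, keeping the $g'(\eta^n)$-weighted cross terms under control uniformly in $\tau$ and in the spatial mesh. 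I expect this eigenvalue estimate to be the main technical burden of the proof.
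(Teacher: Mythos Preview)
The paper does not prove this lemma at all; it is quoted from the cited reference \cite{LiX2023c}. So there is no ``paper's own proof'' to compare against, and your task was really to reconstruct the argument of that reference. Your Steps~1 and~2 are correct and standard: the incremental rewriting is right, and the exact Taylor expansion together with the bound $\tfrac14(u^n+u^{n+1})^2+\tfrac12(u^n)^2\le\tfrac32 M_0^2$ indeed yields
\[
E_h(u^{n+1})-E_h(u^n)\le\langle\mu^n,\delta u\rangle+\tfrac12\langle\mathcal M\,\delta u,\delta u\rangle,\qquad \mathcal M=(I+\Delta_h)^2+2\kappa I.
\]

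Step~3, however, has a structural gap that your own ``main obstacle'' paragraph already senses but underestimates. If you substitute $\delta u=-\mathcal A\mu^n-\mathcal B r$ and regard $(\mu^n,r)$ as free, the resulting $2\times2$ Fourier block has $(r,r)$-entry $\tfrac12 m b^2>0$, so the form is \emph{never} negative semidefinite; your one-step bound simply does not control $r$. Trying to rescue this by writing $r=g'(\eta^n)(-\mathcal A\mu^n)$ and using $-(\varepsilon+\kappa)\le g'\le\kappa$ also fails: at the critical frequency $\lambda=1$ (where $(I+\Delta_h)$ vanishes) and with $g'\equiv-(\varepsilon+\kappa)$, a direct computation gives $\mathcal M^{1/2}\delta u+\mathcal M^{-1/2}\mu^n$ of size about $3\,\|\mathcal M^{-1/2}\mu^n\|$ for large $\tau\Lambda$, violating your norm inequality. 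The pointwise multiplier $g'(\eta^n)$ is not Fourier-diagonal, so there is no ``frequency-resolved'' argument that mixes it with $\mathcal A,\mathcal B$ in the way you propose.

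The fix used in \cite{LiX2023c} is to pass through the intermediate stage and apply your Step~2 bound \emph{twice}: once for $u^n\to\tilde u^{n+1}$ and once for $\tilde u^{n+1}\to u^{n+1}$ (this is where the hypothesis $\|\tilde u^{n+1}\|_\infty\le M_0$, which your Step~2 never uses, actually enters). The key algebraic identity is
\[
\mu(\tilde u^{n+1})=\big(g(\tilde u^{n+1})-g(u^n)\big)+e^{-\tau L_\kappa}\mu(u^n)=r+e^{-\tau L_\kappa}\mu^n,
\]
obtained from $\tilde u^{n+1}=e^{-\tau L_\kappa}u^n+\tau\phi_1(\tau L_\kappa)\Delta_h g(u^n)$ and $\tau L_\kappa\phi_1(\tau L_\kappa)=I-e^{-\tau L_\kappa}$. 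Summing the two stage bounds then gives a quadratic form in the \emph{independent} pair $(\mu^n,r)$ whose Fourier block is
\[
\begin{pmatrix}-a+\tfrac12 m a^2 & -\tfrac12 e b\\[2pt] -\tfrac12 e b & -b+\tfrac12 m b^2\end{pmatrix},\qquad e=e^{-\tau\Lambda},
\]
with both diagonal entries nonpositive (since $mb\le ma\le 2$, which follows from $\tau\Lambda\phi_1(\tau\Lambda)\le1$ and $(1-\lambda)^2+2\kappa\le2((1-\lambda)^2+\kappa)$). The crucial $-b$ on the diagonal is exactly what the two-step route buys you and what your direct route lacks; checking the determinant sign is then a genuine but routine scalar inequality in $\tau\Lambda$.
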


In the existing work \cite{LiX2023c}, a local-in-time convergence analysis is performed, in the $\| \cdot \|_\infty$ norm, to justify the parameter $M_0$. This in turn determines the value of $\kappa$, so that the energy stability analysis becomes available in Lemma~\ref{thm: energy stab-1}. On the other hand, it is observed that, the convergence constant contains an exponential growth term in time, due to the nonlinearity of the PDE.  Hence, the $\| \cdot \|_\infty$ bound of the numerical solution  is just a local-in-time result  and such an energy stability estimate turns out to be a local-in-time analysis,  since the $\| \cdot \|_\infty$ bound of the numerical solution is only valid  local-in-time by using the convergence analysis approach.


\subsection{Some related operators in the Fourier space}

To facilitate the global-in-time energy stability analysis, we introduce the linear operators
\begin{align*}
  G_h & =  \phi_1 (\tau L_\kappa) = ( \tau L_\kappa )^{-1} ( I - {\rm e}^{-\tau L_\kappa} ) ,
\\
  G^{(1)}_h & =  \phi_2 (\tau L_\kappa) = ( \tau L_\kappa)^{-1} ( I - (\tau L_\kappa)^{-1} (I - {\rm e}^{-\tau L_\kappa}) ) ,
\\
  G^{(2)}_h & =  \left( \phi_1 (\tau L_\kappa) \right)^{-1}  \phi_2 (\tau L_\kappa)
  =  ( I - {\rm e}^{-\tau L_\kappa} )^{-1}  ( I - (\tau L_\kappa)^{-1} (I - {\rm e}^{-\tau L_\kappa}) ) ,
\end{align*}
in which $\phi_j$ are defined in~\eqref{phi-1}.  Moreover,  for any $f \in {\mathcal C}_{\rm per}$ with the following discrete Fourier expansion:
\begin{equation}
  f_{i,j,k} = \sum_{\ell,m,n=-K}^K \hat{f}_{\ell, m,n} {\rm e}^{2 \pi \mathrm{i} ( \ell x_i + m y_j + n z_k)/L} ,  
  \label{Fourier-1}
\end{equation}
 the above operators can be represented as
\begin{align*}
  &
  ( G_h f )_{i,j,k} = \sum_{\ell, m,n=-K}^K  \frac{1
  - {\rm e}^{- \tau \Lambda_{\ell,m.n}} }{\tau \Lambda_{\ell,m,n}}
  \hat{f}_{\ell,m,n} {\rm e}^{2 \pi \mathrm{i} ( \ell x_i + m y_j + n z_k)/L} ,
\\
  &
  ( G^{(1)}_h f )_{i,j,k} = \sum_{\ell, m,n=-K}^K  \frac{1 - \frac{1 - {\rm e}^{- \tau \Lambda_{\ell, m,n}} }{\tau \Lambda_{\ell,m,n}} }{\tau \Lambda_{\ell,m,n}}
  \hat{f}_{\ell, m,n} {\rm e}^{2 \pi \mathrm{i} ( \ell x_i + m y_j + n z_k)/L} , 
\\
  &
  ( G^{(2)}_h f )_{i,j,k} = \sum_{\ell, m,n=-K}^K  \frac{1 - \frac{1 - {\rm e}^{- \tau \Lambda_{\ell, m,n}} }{\tau \Lambda_{\ell,m,n}} }{1 - {\rm e}^{- \tau \Lambda_{\ell,m,n}} }
  \hat{f}_{\ell, m,n} {\rm e}^{2 \pi \mathrm{i} ( \ell x_i + m y_j + n z_k)/L} , 
\end{align*}
with
\[
\Lambda_{\ell, m,n} = \big( (1 - \lambda_{\ell,m,n} )^2 + \kappa \big) \lambda_{\ell,m,n}, \quad
\lambda_{\ell,m,n} = \frac{4}{h^2} \big( \sin^2 \frac{\ell \pi h}{L} + \sin^2 \frac{m \pi h}{L} + \sin^2 \frac{n \pi h}{L} \big).
\]
Meanwhile, since all the eigenvalues  $\frac{1 - {\rm e}^{- \tau \Lambda_{\ell,m,n}} }{\tau \Lambda_{\ell,m,n}}$ are non-negative, a natural definition of $G^{(0)}_h = ( G_h)^{1/2}$ is given by
\begin{equation*}
  ( G^{(0)}_h f )_{i,j,k} = \sum_{\ell, m,n=-K}^K  \bigg( \frac{1 - {\rm e}^{- \tau \Lambda_{\ell,m,n}} }{\tau \Lambda_{\ell,m,n}} \bigg)^{\frac12}
  \hat{f}_{\ell,m,n} {\rm e}^{2 \pi \mathrm{i} ( \ell x_i + m y_j + n z_k)/L}  .
\end{equation*}
It is clear that the operator $G^{(0)}_h$ is commutative with any discrete differential operator, and the following summation by parts formula is valid:
\begin{equation*}
   \langle f , \phi_1 (\tau L_\kappa) g \rangle = \langle f , G_h g \rangle = \langle G^{(0)}_h f , G^{(0)}_h g \rangle .
\end{equation*}
Similarly, we are able to define $G^{(3)}_h = ( G_h^{(1)})^{1/2}$ and $G^{(4)}_h = ( G_h^{(2)})^{1/2}$ as
\begin{align*}
  ( G^{(3)}_h f )_{i,j,k} & = \sum_{\ell, m,n=-K}^K  \bigg( \frac{1 - \frac{1 - {\rm e}^{- \tau \Lambda_{\ell, m,n}} }{\tau \Lambda_{\ell,m,n}} }{\tau \Lambda_{\ell,m,n}}  \bigg)^{\frac12}
  \hat{f}_{\ell,m,n} {\rm e}^{2 \pi \mathrm{i} ( \ell x_i + m y_j + n z_k)/L}  , 
\\
  ( G^{(4)}_h f )_{i,j,k} & = \sum_{\ell, m,n=-K}^K  \bigg( \frac{1 - \frac{1 - {\rm e}^{- \tau \Lambda_{\ell, m,n}} }{\tau \Lambda_{\ell,m,n}} }{1 - {\rm e}^{- \tau \Lambda_{\ell,m,n}} }  \bigg)^{\frac12}
  \hat{f}_{\ell,m,n} {\rm e}^{2 \pi \mathrm{i} ( \ell x_i + m y_j + n z_k)/L}  . 
\end{align*}
The following summation-by-parts formula can be derived in the same manner:
\begin{equation*}
   \langle f , \phi_2 (\tau L_\kappa) g \rangle = \langle f , G_h^{(1)} g \rangle = \langle G^{(0)}_h f , G^{(3)}_h G^{(4)}_h g \rangle .
\end{equation*}
In addition, the following operator is introduced, which will be used in the analysis for the diffusion part:
\begin{equation}
  ( G^{(5)}_h f )_{i,j,k}  = \sum_{\ell, m,n=-K}^K  \bigg( \frac{1 - {\rm e}^{- \tau \Lambda_{\ell,m,n}} }{\tau} \bigg)^{\frac12} \lambda_{\ell,m,n}
  \hat{f}_{\ell,m,n} {\rm e}^{2 \pi \mathrm{i} ( \ell x_i + m y_j + n z_k)/L}  .
    \label{Fourier-7-1}
\end{equation}

\subsection{A few preliminary estimates}

The following preliminary estimates are needed in the subsequent analysis; the detailed proof will be provided in Appendices~\ref{appen: prop 1} and \ref{appen: prop 2}, respectively.

\begin{proposition}  \label{prop:prop 1}
  Assume that $\kappa \ge 1$. For any $f \in {\mathcal C}_{\rm per}$, we have
\begin{align}
  &
  | G^{(0)}_h \nabla_h \Delta_h f \|_2
  \le \| G^{(0)}_h \Delta_h f \|_2^\frac23 \cdot \| G^{(0)}_h \nabla_h \Delta_h^2 f \|_2^\frac13 ,
  \label{prop-1-0-5}
\\
  &
   \| \Delta_h f \|_2^2 \ge \tau \| G_h^{(5)} f \|_2^2 ,  \label{prop-1-0-3}
\\
  &
  \langle G_h L_\kappa f , \Delta_h^2 f \rangle  = \| G_h^{(5)} f \|_2^2
  \ge \frac12 \| G_h^{(0)} \nabla_h \Delta_h^2 f \|_2^2 + (\kappa -1) \| G_h^{(0)} \nabla_h  \Delta_h f \|_2^2 , \label{prop-1-0-1}
\\
  &
  \left\langle G_h L_\kappa  f , \Delta_h^2 {\rm e}^{- \tau L_\kappa} f  \right\rangle
  \ge \| G_h^{(5)} ( {\rm e}^{- \tau L_\kappa} f ) \|_2^2 ,  \label{prop-1-0-2}
\\
  &
 \| G^{(0)}_h f \|_2  \le  \| f \|_2 , \, \, \, \| G^{(3)}_h f \|_2  \le \| G^{(0)}_h f \|_2 , \label{prop-1-0-4}
\\
 &
  \| G^{(3)}_h f \|_2  \le  \frac{1}{\sqrt{2}} \| f \|_2 , \, \, \, \| G^{(4)}_h f \|_2  \le \| f \|_2 . \label{prop-1-0-6}
\end{align}
\end{proposition}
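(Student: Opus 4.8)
The plan is to diagonalize every operator appearing in Proposition~\ref{prop:prop 1} against the discrete Fourier basis $\{{\rm e}^{2\pi\mathrm{i}(\ell x_i + m y_j + n z_k)/L}\}$, so that each of the six inequalities collapses to an elementary scalar estimate for the associated Fourier symbols, combined with the discrete Parseval identity. Writing $f$ as in~\eqref{Fourier-1}, recall that $-\Delta_h$ is the Fourier multiplier with symbol $\lambda_{\ell,m,n}\ge 0$, so $L_\kappa$ has symbol $\Lambda_{\ell,m,n} = ((1-\lambda_{\ell,m,n})^2 + \kappa)\lambda_{\ell,m,n}$, which is nonnegative thanks to $\kappa\ge 1$; accordingly ${\rm e}^{-\tau L_\kappa}$, $G_h$, $G_h^{(0)}$, $G_h^{(3)}$, $G_h^{(4)}$, $G_h^{(5)}$ are the multipliers with symbols ${\rm e}^{-\tau\Lambda_{\ell,m,n}}$, $\phi_1(\tau\Lambda_{\ell,m,n})$, $\phi_1(\tau\Lambda_{\ell,m,n})^{1/2}$, $\phi_2(\tau\Lambda_{\ell,m,n})^{1/2}$, $(\phi_2(\tau\Lambda_{\ell,m,n})/\phi_1(\tau\Lambda_{\ell,m,n}))^{1/2}$, $\big(\frac{1-{\rm e}^{-\tau\Lambda_{\ell,m,n}}}{\tau}\big)^{1/2}\lambda_{\ell,m,n}$, with $\phi_1$, $\phi_2$ as in~\eqref{phi-1}. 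Since all symbols are real, for any two such multipliers $A$, $B$ one has $\langle Af, Bf\rangle = |\Omega|\sum_{\ell,m,n} a_{\ell,m,n}b_{\ell,m,n}|\hat f_{\ell,m,n}|^2$ with $a_{\ell,m,n}$, $b_{\ell,m,n}$ the corresponding symbols; this is the only device needed. From here on I suppress the index $(\ell,m,n)$, write $\lambda$, $\Lambda$, $\hat f$ for $\lambda_{\ell,m,n}$, $\Lambda_{\ell,m,n}$, $\hat f_{\ell,m,n}$, and let $\sum$ denote the sum over modes.

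With this reduction the routine estimates fall out at once. For~\eqref{prop-1-0-5} the three norms equal $\sum b\lambda^3$, $\sum b\lambda^2$, $\sum b\lambda^5$ with $b = \phi_1(\tau\Lambda)|\hat f|^2\ge 0$, and since $\lambda^3 = (\lambda^2)^{2/3}(\lambda^5)^{1/3}$ the claim is exactly H\"older's inequality with exponents $\tfrac32$ and $3$ (equivalently, the log-convexity of $p\mapsto\sum b\lambda^p$). Estimate~\eqref{prop-1-0-3} follows from $1-{\rm e}^{-\tau\Lambda}\le 1$, since $\|\Delta_h f\|_2^2 = \sum\lambda^2|\hat f|^2$ while $\tau\|G_h^{(5)}f\|_2^2 = \sum(1-{\rm e}^{-\tau\Lambda})\lambda^2|\hat f|^2$. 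For~\eqref{prop-1-0-2} one first checks the symbol identity $\langle G_h L_\kappa f,\Delta_h^2 f\rangle = \|G_h^{(5)}f\|_2^2$ (the symbol of $G_h L_\kappa$ being $\frac{1-{\rm e}^{-\tau\Lambda}}{\tau}$); the left side of~\eqref{prop-1-0-2} then reads $\sum\frac{1-{\rm e}^{-\tau\Lambda}}{\tau}\lambda^2{\rm e}^{-\tau\Lambda}|\hat f|^2$ against the right side $\sum\frac{1-{\rm e}^{-\tau\Lambda}}{\tau}\lambda^2{\rm e}^{-2\tau\Lambda}|\hat f|^2$, so it amounts to ${\rm e}^{-\tau\Lambda}\ge{\rm e}^{-2\tau\Lambda}$. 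Finally~\eqref{prop-1-0-4} and~\eqref{prop-1-0-6} are immediate from Lemma~\ref{lem:lem 1}(2): the squared symbols of $G_h^{(0)}$, $G_h^{(3)}$, $G_h^{(4)}$ are $\phi_1(\tau\Lambda)$, $\phi_2(\tau\Lambda)$, $\phi_2(\tau\Lambda)/\phi_1(\tau\Lambda)$, and Lemma~\ref{lem:lem 1}(2) supplies $0<\phi_1\le 1$, $\phi_2\le\phi_1$, $\phi_2\le\tfrac12$, $\phi_2/\phi_1\le 1$.

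The one estimate that needs a genuine computation, and which I expect to be the crux, is the lower bound in~\eqref{prop-1-0-1}. After the Fourier reduction, both sides carry the common nonnegative factor $\frac{1-{\rm e}^{-\tau\Lambda}}{\tau\Lambda}|\hat f|^2$ (the $\lambda=0$ modes contributing zero on both sides), so mode by mode it reduces to the scalar inequality
\[
  \Lambda \ge \frac12\lambda^3 + (\kappa-1)\lambda, \qquad \lambda>0 .
\]
Inserting $\Lambda = \lambda - 2\lambda^2 + \lambda^3 + \kappa\lambda$, the terms $\kappa\lambda$ cancel and the difference of the two sides equals $2\lambda - 2\lambda^2 + \frac12\lambda^3 = \frac\lambda2(\lambda-2)^2 \ge 0$; hence the bound holds for every mode (in fact for any $\kappa$, the hypothesis $\kappa\ge 1$ serving only to make the second right-hand term a legitimate nonnegative contribution in later use), and it is sharp at $\lambda=2$. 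I would display this perfect-square identity explicitly, since it is the one place where the artificial diffusion parameter $\kappa$ interacts with the sixth-order operator. The remaining work is bookkeeping: summing the mode-by-mode inequalities to recover the stated $\ell^2$ bounds, and matching each operator symbol against its definition in Section~\ref{sec:numerical scheme}.
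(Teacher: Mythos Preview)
Your proposal is correct and follows essentially the same Fourier-diagonalization route as the paper: the proofs of \eqref{prop-1-0-3}, \eqref{prop-1-0-1}, \eqref{prop-1-0-2}, \eqref{prop-1-0-4}, \eqref{prop-1-0-6} match the paper's Appendix~\ref{appen: prop 1} almost line for line, including the perfect-square identity $\Lambda - \tfrac12\lambda^3 - (\kappa-1)\lambda = \tfrac{\lambda}{2}(\lambda-2)^2$ (the paper writes it as $(1-\lambda)^2+\kappa = \tfrac12(2-\lambda)^2 + \tfrac12\lambda^2 + (\kappa-1)$, which is the same thing). The one genuine difference is \eqref{prop-1-0-5}: the paper obtains the interpolation by two summation-by-parts/Cauchy--Schwarz steps, bounding $\|G_h^{(0)}\nabla_h\Delta_h f\|_2^2$ by $\|G_h^{(0)}\Delta_h f\|_2\,\|G_h^{(0)}\Delta_h^2 f\|_2$ and then $\|G_h^{(0)}\Delta_h^2 f\|_2^2$ by $\|G_h^{(0)}\nabla_h\Delta_h f\|_2\,\|G_h^{(0)}\nabla_h\Delta_h^2 f\|_2$, whereas you go straight to H\"older on the Fourier side via $\lambda^3 = (\lambda^2)^{2/3}(\lambda^5)^{1/3}$; both are standard and yield the same bound.
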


\begin{proposition}  \label{prop:prop 2}
   For any two periodic grid functions $f, \, g \in {\mathcal C}_{\rm per}$, we have
\begin{align}
  &
  \tau \langle G_h L_\kappa  f , \Delta_h^2 {\rm e}^{-\tau L_\kappa} f \rangle
   + \| \Delta_h ( g -  {\rm e}^{-\tau L_\kappa}  f )  \|_2^2 \ge \tau \| G_h^{(5)} g \|_2^2 .   \label{prop-2-0}
\end{align}
\end{proposition}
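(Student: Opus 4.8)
\emph{Proof sketch (plan).}
The plan is to pass to the discrete Fourier representation~\eqref{Fourier-1}, in which $L_\kappa$, $\Delta_h$, ${\rm e}^{-\tau L_\kappa}$, $G_h=\phi_1(\tau L_\kappa)$ and $G_h^{(5)}$ are all simultaneously diagonal, and thereby reduce the claimed inequality~\eqref{prop-2-0} to one elementary scalar inequality per Fourier mode; that mode-wise inequality will turn out to be a completion of the square, so there is no hard analytic ingredient.

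Concretely, I would write $f_{i,j,k}=\sum_{\ell,m,n=-K}^K\hat f_{\ell,m,n}{\rm e}^{2\pi\mathrm{i}(\ell x_i+my_j+nz_k)/L}$ and likewise for $g$, and set $\theta_{\ell,m,n}:={\rm e}^{-\tau\Lambda_{\ell,m,n}}\in(0,1]$. Using the Fourier symbols $\Delta_h\leftrightarrow-\lambda_{\ell,m,n}$, $L_\kappa\leftrightarrow\Lambda_{\ell,m,n}$, ${\rm e}^{-\tau L_\kappa}\leftrightarrow\theta_{\ell,m,n}$, the elementary identity $\phi_1(\tau\Lambda_{\ell,m,n})\,\Lambda_{\ell,m,n}=(1-\theta_{\ell,m,n})/\tau$ (so that the composite operator $G_hL_\kappa$ carries the clean symbol $(1-\theta_{\ell,m,n})/\tau$), and the symbol of $G_h^{(5)}$ from~\eqref{Fourier-7-1}, Parseval's identity gives, with all sums over $-K\le\ell,m,n\le K$ and the abbreviations $\theta=\theta_{\ell,m,n}$, $\lambda=\lambda_{\ell,m,n}$:
\begin{align*}
  \tau\langle G_hL_\kappa f,\Delta_h^2{\rm e}^{-\tau L_\kappa}f\rangle &= \textstyle\sum(1-\theta)\,\theta\,\lambda^2\,|\hat f_{\ell,m,n}|^2,\\
  \|\Delta_h(g-{\rm e}^{-\tau L_\kappa}f)\|_2^2 &= \textstyle\sum\lambda^2\,|\hat g_{\ell,m,n}-\theta\,\hat f_{\ell,m,n}|^2,\\
  \tau\|G_h^{(5)}g\|_2^2 &= \textstyle\sum(1-\theta)\,\lambda^2\,|\hat g_{\ell,m,n}|^2.
\end{align*}
Hence it suffices to prove, for each mode and with $a:=\hat f_{\ell,m,n}$, $b:=\hat g_{\ell,m,n}\in\mathbb{C}$ and $\theta\in(0,1]$, the scalar inequality $(1-\theta)\theta|a|^2+|b-\theta a|^2\ge(1-\theta)|b|^2$.

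Expanding $|b-\theta a|^2=|b|^2-2\theta\,\mathrm{Re}(\bar b a)+\theta^2|a|^2$ and collecting terms, the left-hand side minus the right-hand side collapses exactly to $\theta\big(|a|^2-2\,\mathrm{Re}(\bar b a)+|b|^2\big)=\theta\,|a-b|^2\ge0$; multiplying by $\lambda_{\ell,m,n}^2\ge0$ and summing over all modes yields
\[
  \tau\langle G_hL_\kappa f,\Delta_h^2{\rm e}^{-\tau L_\kappa}f\rangle+\|\Delta_h(g-{\rm e}^{-\tau L_\kappa}f)\|_2^2-\tau\|G_h^{(5)}g\|_2^2=\sum_{\ell,m,n}\theta_{\ell,m,n}\,\lambda_{\ell,m,n}^2\,|\hat f_{\ell,m,n}-\hat g_{\ell,m,n}|^2\ge0,
\]
which is precisely~\eqref{prop-2-0}. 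I do not expect a genuine obstacle: once everything is diagonalized the estimate is a perfect square, and in fact it quantifies the defect. The only points that require care are (i) recording the Fourier symbols of the composite operator $G_hL_\kappa$ and of $G_h^{(5)}$ correctly, so that the three terms line up with the common weight $\lambda_{\ell,m,n}^2$; and (ii) the degenerate modes with $\lambda_{\ell,m,n}=0$ (hence $\Lambda_{\ell,m,n}=0$, $\theta=1$), for which every term vanishes and the inequality is trivial. Note that the argument uses only $\theta_{\ell,m,n}={\rm e}^{-\tau\Lambda_{\ell,m,n}}>0$, so no lower bound on $\kappa$ is needed, in contrast with Proposition~\ref{prop:prop 1}.
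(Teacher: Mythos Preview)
Your proof is correct. Both you and the paper diagonalize in Fourier space and reduce~\eqref{prop-2-0} to a mode-wise scalar inequality with weight $\lambda_{\ell,m,n}^2$, but the two arguments finish differently. The paper factors out $(1-\theta)$, writes the remaining quantity as $e^{\tau\Lambda}|\theta\hat f|^2+\tfrac{1}{1-\theta}|\hat g-\theta\hat f|^2$, applies the AM--GM/Cauchy inequality $(e^{\tau\Lambda}-1)A^2+\bigl(\tfrac{1}{1-\theta}-1\bigr)B^2\ge 2AB$ (whose coefficient product equals~$1$) to obtain $(|\theta\hat f|+|\hat g-\theta\hat f|)^2$, and then invokes the triangle inequality to reach $|\hat g|^2$. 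Your route is more direct: you expand the quadratic and identify the left side minus the right side \emph{exactly} as $\theta_{\ell,m,n}\,\lambda_{\ell,m,n}^2\,|\hat f_{\ell,m,n}-\hat g_{\ell,m,n}|^2$, so the inequality is a perfect square with an explicit defect. This is a genuinely cleaner argument; it avoids the two auxiliary inequalities, works directly with complex coefficients, and yields an equality characterizing when~\eqref{prop-2-0} is sharp (namely $\hat f_{\ell,m,n}=\hat g_{\ell,m,n}$ on every mode with $\theta_{\ell,m,n}\lambda_{\ell,m,n}^2>0$). One cosmetic point: your Parseval identities are missing the overall factor $L^3$ carried by the paper's inner product; since it appears uniformly in all three terms this does not affect the argument, but you may want to restore it for consistency.
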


\begin{remark}
In fact, a combination of inequalities~\eqref{prop-1-0-3} and \eqref{prop-1-0-2} implies that
\begin{align*}
  &
  \tau \langle G_h L_\kappa  f , \Delta_h^2 {\rm e}^{-\tau L_\kappa} f \rangle
   + \| \Delta_h ( g -  {\rm e}^{-\tau L_\kappa}  f )  \|_2^2
\\
  & \qquad \ge
  \tau \| G_h^{(5)}  ( {\rm e}^{-\tau L_\kappa}  f ) \|_2^2
  + \tau \| G_h^{(5)} ( g -  {\rm e}^{- \tau L_\kappa} f ) \|_2^2
  \ge \frac12 \tau \| G_h^{(5)} g \|_2^2 ,
\end{align*}
for any $f, \, g \in {\mathcal C}_{\rm per}$, in which the quadratic inequality has been applied. In comparison, the derived estimate~\eqref{prop-2-0} turns out to be a more refined one.
\end{remark}

Moreover, the following inequalities will be extensively used in the nonlinear analysis.

\begin{lemma}  \label{lem:lem 2}
For any $f \in {\mathcal C}_{\rm per}$, we have
\begin{align}
   &
  \| f \|_\infty \le C_2 ( | \bar{f} | + \| \Delta_h f \|_2 ) ,  \label{lem 2-0-1}
\\
  &
  \| \nabla_h (f^3) \|_2 \le 3 \| f \|_\infty^2 \cdot \| \nabla_h f \|_2 ,  \quad
  \| \nabla_h f \|_2  \le C_3 \| \Delta_h f \|_2  ,  \label{lem 2-0-2}
\end{align}
in which the constants $C_2$ and $C_3$ are only dependent on $\Omega$, independent on $f$, $h$ and $\kappa$.
\end{lemma}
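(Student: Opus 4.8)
My plan is to prove the three inequalities separately. The first, \eqref{lem 2-0-1}, is the discrete analogue of the Sobolev embedding $H^2(\Omega)\hookrightarrow L^\infty(\Omega)$ and is the only one requiring real work; the last inequality of \eqref{lem 2-0-2} is a discrete Poincar\'e-type bound, and the product estimate is purely pointwise. Since none of the three quantities involves $\kappa$, the $\kappa$-independence of $C_2,C_3$ is automatic, so only the $h$-independence needs attention.

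For \eqref{lem 2-0-1} I would work in the Fourier pseudo-spectral representation \eqref{Fourier-1}. Splitting off the mean, $f=\bar f+\tilde f$ with $\hat{\tilde f}_{0,0,0}=0$ and $\hat{\tilde f}_{\ell,m,n}=\hat f_{\ell,m,n}$ otherwise, the termwise triangle inequality on the series gives $\|\tilde f\|_\infty\le\sum_{(\ell,m,n)\ne(0,0,0)}|\hat f_{\ell,m,n}|$, and Cauchy--Schwarz then yields $\sum_{(\ell,m,n)\ne 0}|\hat f_{\ell,m,n}|\le\big(\sum_{(\ell,m,n)\ne 0}\lambda_{\ell,m,n}^{-2}\big)^{1/2}\big(\sum_{(\ell,m,n)\ne 0}\lambda_{\ell,m,n}^{2}|\hat f_{\ell,m,n}|^{2}\big)^{1/2}$. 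This reduces the claim to two sub-points: (i) by Parseval's identity the second factor is a fixed multiple of $\|\Delta_h f\|_2$; and (ii) the first factor is bounded uniformly in $h$. For (ii) I would use the concavity bound $\sin t\ge\frac{2}{\pi}t$ on $[0,\pi/2]$, which applies since $|\ell|,|m|,|n|\le K\le N/2$ places each argument $\ell\pi h/L=\ell\pi/N$ in $[0,\pi/2]$; this gives the $h$-uniform lower bound $\lambda_{\ell,m,n}\ge\frac{16}{L^2}(\ell^2+m^2+n^2)$, so $\sum_{(\ell,m,n)\ne 0}\lambda_{\ell,m,n}^{-2}$ is dominated, uniformly in $N$, by $\frac{L^4}{256}$ times the convergent three-dimensional lattice sum $\sum_{\mathbb Z^3\setminus\{0\}}(\ell^2+m^2+n^2)^{-2}$, a quantity depending only on $\Omega$. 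Combining (i) and (ii) and adding $|\bar f|$ yields \eqref{lem 2-0-1}.

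The Poincar\'e-type inequality $\|\nabla_h f\|_2\le C_3\|\Delta_h f\|_2$ comes out of the same picture: by Parseval, $\|\nabla_h f\|_2^2$ and $\|\Delta_h f\|_2^2$ equal, up to the fixed volume factor, $\sum_{(\ell,m,n)}\lambda_{\ell,m,n}|\hat f_{\ell,m,n}|^2$ and $\sum_{(\ell,m,n)}\lambda_{\ell,m,n}^2|\hat f_{\ell,m,n}|^2$; the zero mode contributes nothing to either, and for $(\ell,m,n)\ne 0$ the bound $\lambda_{\ell,m,n}\ge 16/L^2$ gives the termwise inequality $\lambda_{\ell,m,n}\le\frac{L^2}{16}\lambda_{\ell,m,n}^2$, so one may take $C_3=L/4$. (Equivalently, $\|\nabla_h f\|_2^2=-\langle f-\bar f,\Delta_h f\rangle\le\|f-\bar f\|_2\|\Delta_h f\|_2$ together with the discrete Poincar\'e inequality $\|f-\bar f\|_2\le C_P\|\nabla_h f\|_2$.) The product estimate $\|\nabla_h(f^3)\|_2\le 3\|f\|_\infty^2\|\nabla_h f\|_2$ is pointwise: applying $a^3-b^3=(a-b)(a^2+ab+b^2)$ on each staggered edge, $D_x(f^3)_{i+\hf,j,k}=(D_xf)_{i+\hf,j,k}\big(f_{i+1,j,k}^2+f_{i+1,j,k}f_{i,j,k}+f_{i,j,k}^2\big)$ with $|f_{i+1,j,k}^2+f_{i+1,j,k}f_{i,j,k}+f_{i,j,k}^2|\le 3\|f\|_\infty^2$, so $|D_x(f^3)|\le 3\|f\|_\infty^2|D_xf|$ pointwise, and similarly in the $y$ and $z$ directions; squaring, summing against the nonnegative averaging weights defining $\eipx{\cdot}{\cdot}$, $\eipy{\cdot}{\cdot}$, $\eipz{\cdot}{\cdot}$, adding the three directions and taking square roots completes the proof.

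The one genuine obstacle is sub-point (ii): verifying that the lattice sum $\sum\lambda_{\ell,m,n}^{-2}$ stays bounded \emph{uniformly in the spatial mesh}. This is precisely where the convergence of $\sum_{\mathbb Z^3\setminus\{0\}}|(\ell,m,n)|^{-4}$ in three dimensions is used, and it is what forces $C_2$ to be mesh-independent; everything else is routine.
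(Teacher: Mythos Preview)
Your argument is correct. For the product estimate $\|\nabla_h(f^3)\|_2\le 3\|f\|_\infty^2\|\nabla_h f\|_2$ your pointwise factorization via $a^3-b^3=(a-b)(a^2+ab+b^2)$ is exactly what the paper does; for \eqref{lem 2-0-1} and the second inequality in \eqref{lem 2-0-2} the paper simply cites Lemma~3.1 of \cite{guo16} and skips the details, so your Fourier--Parseval argument with the uniform eigenvalue lower bound $\lambda_{\ell,m,n}\ge\frac{16}{L^2}(\ell^2+m^2+n^2)$ supplies a self-contained proof where the paper defers to the literature.
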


\begin{proof}
Inequality~\eqref{lem 2-0-1} comes from Lemma 3.1 in~\cite{guo16}; the technical details are skipped.

In terms of the first inequality in~\eqref{lem 2-0-2}, we begin with the following point-wise expansion, from two neighboring mesh points, $(i,j,k)$ to $(i+1,j,k)$:
\begin{equation*}
  ( D_x (f^3) )_{i+\hf, j,k} = \frac{1}{h} (f_{i+1,j,k}^3 - f_{i,j,k}^3 )
  = \Big( f_{i+1,j,k}^2 + f_{i+1,j,k} \cdot f_{i,j,k} + f_{i,j,k}^2 \Big)  ( D_x f )_{i+\hf, j,k} .
\end{equation*}
In turn, an application of discrete H\"older inequality indicates that
\begin{equation*}
  \| D_x (f^3) \|_2 \le 3 \| f \|_\infty^2 \cdot  \| D_x f \|_2.
\end{equation*}
The  corresponding estimates in the $y$ and $z$ directions can be similarly derived. This completes the proof of the first inequality in~\eqref{lem 2-0-2}.

The second inequality in~\eqref{lem 2-0-2} also comes from Lemma 3.1 in~\cite{guo16}; the technical details are skipped for the sake of brevity.
\end{proof}

\section{The global-in-time energy stability estimate}	\label{sec:energy stability}
	
In this article, we perform a direct analysis for the numerical solution, so that a uniform-in-time $H_h^2$ estimate becomes available for the numerical solution. Because of the Sobolev embedding from $H^2$ to $L^\infty$ in the 3-D space, we are able to recover the uniform-in-time value of $M_0$ and $\kappa$ arisen in Lemma~\ref{thm: energy stab-1}. This in turn gives a global-in-time energy stability estimate for the ETDRK2 scheme~\eqref{ETDRK2}.

To proceed  with the global-in-time energy stability analysis, we make an \textit{a~priori} assumption at the previous time step:
\begin{equation}
  E_h (u^n) \le E_h (u^0) := \tilde{C}_0 .  \label{a priori-1}
\end{equation}
Such an \textit{a~priori} assumption will be recovered at the next time step. With  this discrete energy assumption, the following $H_h^2$ bound of the numerical solution can be derived:
\begin{equation}
  \| \Delta_h u^n \|_2 \le \tilde{C}_1:= 2 \Big( \tilde{C}_0 + | \Omega | \Big)^\frac12  .
  \label{a priori-2}
\end{equation}
Meanwhile, it is observed that the ETDRK2 scheme~\eqref{ETDRK2} is mass conservative at a discrete level:
\begin{equation*}
  \overline{u^{n+1}} = \overline{\tilde{u}^{n+1}} = \overline{u^n} = \overline{u^0} := \beta_0 .
\end{equation*}
Now, an application of estimates~\eqref{lem 2-0-1} and \eqref{lem 2-0-2} in Lemma~\ref{lem:lem 2} yields the following nonlinear bounds at the previous time step:
\begin{align}
&   \| u^n \|_\infty \le C_2 ( | \overline{u^n} | + \| \Delta_h u^n \|_2 )
  \le C_2 ( | \beta_0 | + \tilde{C}_1 )  := \tilde{C}_2  ,   \label{a priori-3}
\\
&   \| \nabla_h ((u^n)^3) \|_2 \le
  3 \| u^n \|_\infty^2 \cdot  C_3   \| \Delta_h u^n \|_2
  \le 3 \tilde{C}_2^2 C_3 \tilde{C}_1 := \tilde{C}_3  .   \label{a priori-4}
\end{align}

\subsection{A preliminary estimate for $\| \tilde{u}^{n+1} \|_{H_h^2}$}

We aim to obtain a rough $H_h^2$ estimate for the intermediate stage numerical solution $\tilde{u}^{n+1}$. The current form~\eqref{ETDRK2-2} of the evolutionary algorithm has not indicated a clear interaction between the linear and nonlinear terms. In order to carry out the theoretical analysis in a more convenient way, we denote $u^{n+1,*}= {\rm e}^{- \tau L_\kappa} u^n$, and rewrite the evolutionary equation \eqref{ETDRK2-2} as the following two-substage system:
\begin{align}
  \frac{u^{n+1,*} - u^n}{\tau} & = - L_\kappa \phi_1 (\tau L_\kappa)  u^n ,  \label{tilde u-H2-1-1}
\\
  \frac{\tilde{u}^{n+1} - u^{n+1,*}}{\tau} & = \phi_1 (\tau L_\kappa) f_\kappa (u^n)  .  \label{tilde u-H2-1-2}
\end{align}

Taking a discrete $\ell^2$ inner product with \eqref{tilde u-H2-1-1} by $\Delta_h^2 (u^{n+1,*} + u^n)$ results in
\begin{equation}
   \left\langle u^{n+1,*} - u^n , \Delta_h^2 (u^{n+1,*} + u^n) \right\rangle
   + \tau \langle G_h L_\kappa  u^n , \Delta_h^2 (u^{n+1,*} + u^n) \rangle = 0 .
   \label{tilde u-H2-2}
\end{equation}
For the first term, an direct application of the summation-by-parts formula gives:
\begin{align*}
   \left\langle  u^{n+1,*} - u^n , \Delta_h^2 (u^{n+1,*} + u^n) \right\rangle
  &  = \left\langle  \Delta_h ( u^{n+1,*} - u^n ) , \Delta_h (u^{n+1,*} + u^n) \right\rangle
\\
  &  =
     \|  \Delta_h u^{n+1,*} \|_2^2
   - \|  \Delta_h u^n \|_2^2 .
\end{align*}
In terms of the first term appearing in the diffusion part of~\eqref{tilde u-H2-2}, an application of estimate~\eqref{prop-1-0-1} in Proposition~\ref{prop:prop 1} implies that
\begin{equation}
  \langle G_h L_\kappa u^n, \Delta_h^2 u^n \rangle  = \| G_h^{(5)} u^n \|_2^2
  \ge \frac12 \| G_h^{(0)} \nabla_h \Delta_h^2 u^n \|_2^2
  + (\kappa -1) \| G_h^{(0)} \nabla_h  \Delta_h u^n \|_2^2 .  \label{tilde u-H2-4-1}
\end{equation}
Subsequently, a combination of (\ref{tilde u-H2-2})-(\ref{tilde u-H2-4-1})  yields
\begin{equation}
  \|  \Delta_h u^{n+1,*} \|_2^2   - \| \Delta_h u^n \|_2^2
   +  \tau ( \| G_h^{(5)} u^n \|_2^2
   + \langle G_h L_\kappa  u^n , \Delta_h^2 u^{n+1,*} \rangle ) = 0 .
   \label{tilde u-H2-5}
\end{equation}
Similarly, taking a discrete $\ell^2$ inner product with \eqref{tilde u-H2-1-2} by $2 \Delta_h^2 \tilde{u}^{n+1}$ leads to
\begin{equation}
   \left\langle \tilde{u}^{n+1} - u^{n+1,*}  , 2 \Delta_h^2 \tilde{u}^{n+1} \right\rangle
   = 2 \tau \langle G_h f_\kappa  (u^n) , \Delta_h^2 \tilde{u}^{n+1} \rangle .
   \label{tilde u-H2-6}
\end{equation}
The term on the left-hand side  can be expressed as
\begin{align}
  &   \left\langle  \tilde{u}^{n+1} - u^{n+1,*} , 2 \Delta_h^2 \tilde{u}^{n+1}  \right\rangle
  = 2 \left\langle  \Delta_h ( \tilde{u}^{n+1} - u^{n+1,*} ) , \Delta_h \tilde{u}^{n+1}  \right\rangle  \label{tilde u-H2-7}
\\
  & \qquad =
     \|  \Delta_h \tilde{u}^{n+1} \|_2^2  - \|  \Delta_h u^{n+1,*} \|_2^2
   + \|  \Delta_h ( \tilde{u}^{n+1} - u^{n+1,*} ) \|_2^2   .
 \nonumber
\end{align}
Subsequently, a combination of~\eqref{tilde u-H2-5} and \eqref{tilde u-H2-6}-\eqref{tilde u-H2-7} gives
\begin{align}
  &
  \|  \Delta_h \tilde{u}^{n+1} \|_2^2   - \| \Delta_h u^n \|_2^2
   + \|  \Delta_h ( \tilde{u}^{n+1} - u^{n+1,*} ) \|_2^2    \label{tilde u-H2-9}
\\
  & \qquad  +  \tau ( \| G_h^{(5)} u^n \|_2^2
  + \langle G_h L_\kappa  u^n , \Delta_h^2 u^{n+1,*} \rangle )
   = 2 \tau \langle G_h f_\kappa  (u^n) , \Delta_h^2 \tilde{u}^{n+1} \rangle .  \nonumber
\end{align}
Moreover, an application of inequality~\eqref{prop-2-0} (in Proposition~\ref{prop:prop 2}) reveals that
\begin{equation}
   \tau \langle G_h L_\kappa  u^n , \Delta_h^2 u^{n+1,*} \rangle
   + \|  \Delta_h ( \tilde{u}^{n+1} - u^{n+1,*} ) \|_2^2
   \ge \tau \| G_h^{(5)} \tilde{u}^{n+1} \|_2^2 .
    \label{tilde u-H2-10-1}
\end{equation}
Going back to~\eqref{tilde u-H2-9}, we arrive at
\begin{equation}
  \|  \Delta_h \tilde{u}^{n+1} \|_2^2   - \| \Delta_h u^n \|_2^2
   +  \tau ( \| G_h^{(5)} u^n \|_2^2 + \| G_h^{(5)} \tilde{u}^{n+1} \|_2^2 )
   \le 2 \tau \langle G_h f_\kappa  (u^n) , \Delta_h^2 \tilde{u}^{n+1} \rangle .
   \label{tilde u-H2-10-2}
\end{equation}
Meanwhile, due to~\eqref{prop-1-0-1} in Proposition~\ref{prop:prop 1}, we observe the following inequality:
\begin{align}
 &  \| G_h^{(5)} u^n \|_2^2 + \| G_h^{(5)} \tilde{u}^{n+1} \|_2^2
  \ge \frac12 ( \| G_h^{(0)} \nabla_h \Delta_h^2 u^n \|_2^2
  + \| G_h^{(0)} \nabla_h \Delta_h^2 \tilde{u}^{n+1} \|_2^2 )  \label{tilde u-H2-14-1}
\\
  & \qquad\qquad\qquad\qquad\qquad
  + (\kappa -1) ( \| G_h^{(0)} \nabla_h  \Delta_h u^n \|_2^2
  + \| G_h^{(0)} \nabla_h  \Delta_h \tilde{u}^{n+1} \|_2^2 ) . \nonumber
\end{align}

The right-hand side of~\eqref{tilde u-H2-10-2} contains two parts:
\begin{equation}
   2 \langle G_h f_\kappa  (u^n) , \Delta_h^2 \tilde{u}^{n+1} \rangle
   = 2 \langle G_h \Delta_h ((u^n)^3) , \Delta_h^2 \tilde{u}^{n+1} \rangle
    - 2 (\varepsilon + \kappa) \langle G_h \Delta_h u^n , \Delta_h^2 \tilde{u}^{n+1} \rangle .
    \label{tilde u-H2-11}
\end{equation}
The first term can be analyzed as follows
\begin{align}
  &
     2 \langle G_h \Delta_h ((u^n)^3) , \Delta_h^2 \tilde{u}^{n+1} \rangle
     = - 2 \langle G_h \nabla_h ((u^n)^3) ,  \nabla_h \Delta_h^2 \tilde{u}^{n+1} \rangle   \label{tilde u-H2-12}
\\
  & \qquad =
  - 2 \langle G^{(0)}_h \nabla_h ((u^n)^3) ,
   G^{(0)}_h \nabla_h \Delta_h^2 \tilde{u}^{n+1} \rangle   \nonumber
\\
  & \qquad \le
  2 \| G^{(0)}_h \nabla_h ((u^n)^3) \|_2 \cdot
   \| G^{(0)}_h \nabla_h \Delta_h^2 \tilde{u}^{n+1} \|_2  \nonumber
\\
  & \qquad \le
  2 \| \nabla_h ((u^n)^3) \|_2 \cdot
   \| G^{(0)}_h \nabla_h \Delta_h^2 \tilde{u}^{n+1} \|_2  \nonumber
\\
  & \qquad \le
  8 \| \nabla_h ((u^n)^3) \|_2^2
   + \frac18 \| G^{(0)}_h \nabla_h \Delta_h^2 \tilde{u}^{n+1} \|_2^2    \nonumber
\\
  & \qquad  \le 8  \tilde{C}_3^2
   + \frac18 \| G^{(0)}_h \nabla_h \Delta_h^2 \tilde{u}^{n+1} \|_2^2 ,  \nonumber
\end{align}
in which the summation-by-parts formulas, as well as the first inequality in~\eqref{prop-1-0-4} and the \textit{a~priori} estimate~\eqref{a priori-4} have been applied. The second term, a linear inner product term, can be decomposed into two parts:
the first part is estimated as
\begin{align}
  &
  - 2 ( 1+ \varepsilon) \langle G_h \Delta_h u^n , \Delta_h^2 \tilde{u}^{n+1} \rangle
     =  2 (1 + \varepsilon) \langle G_h \nabla_h u^n ,
     \nabla_h \Delta_h^2 \tilde{u}^{n+1} \rangle   \label{tilde u-H2-13-1}
\\
  & \qquad =
    2 (1 + \varepsilon) \langle G^{(0)}_h \nabla_h u^n ,
   G^{(0)}_h \nabla_h \Delta_h^2 \tilde{u}^{n+1} \rangle
   \nonumber
\\
  & \qquad   \le  2 (1 + \varepsilon) \| G^{(0)}_h \nabla_h u^n \|_2 \cdot
   \| G^{(0)}_h \nabla_h \Delta_h^2 \tilde{u}^{n+1} \|_2  \nonumber
\\
  & \qquad \le
  2 (1 + \varepsilon) \| \nabla_h (u^n) \|_2 \cdot
   \| G^{(0)}_h \nabla_h \Delta_h^2 \tilde{u}^{n+1} \|_2  \nonumber
\\
  & \qquad \le
     2 (1 + \varepsilon) C_3 \| \Delta_h u^n \|_2 \cdot
   \| G^{(0)}_h \nabla_h \Delta_h^2 \tilde{u}^{n+1} \|_2  \nonumber
\\
  & \qquad \le
  16 C_3^2 \| \Delta_h u^n \|_2^2
   + \frac14 \| G^{(0)}_h \nabla_h \Delta_h^2 \tilde{u}^{n+1} \|_2^2
     \nonumber
\\
  & \qquad  \le 16  C_3^2 \tilde{C}_1^2
   + \frac14 \| G^{(0)}_h \nabla_h \Delta_h^2 \tilde{u}^{n+1} \|_2^2 ,\nonumber
\end{align}
in which the inequality~\eqref{lem 2-0-2} is used,
and the second part is analyzed as
\begin{align}
  &
  - 2 ( \kappa -1) \langle G_h \Delta_h u^n , \Delta_h^2 \tilde{u}^{n+1} \rangle
     =  2 (\kappa -1) \langle G_h \nabla_h \Delta_h u^n ,
     \nabla_h \Delta_h \tilde{u}^{n+1} \rangle   \label{tilde u-H2-13-2}
\\
  & \qquad =
    2 (\kappa -1) \langle G^{(0)}_h \nabla_h \Delta_h u^n ,
   G^{(0)}_h \nabla_h \Delta_h \tilde{u}^{n+1} \rangle   \nonumber
\\
 & \qquad =
    (\kappa -1) ( \| G^{(0)}_h \nabla_h \Delta_h u^n \|_2^2
  +  \| G^{(0)}_h \nabla_h \Delta_h \tilde{u}^{n+1} \|_2^2 )  \nonumber
\\
& \qquad\qquad
  -  (\kappa -1) \| G^{(0)}_h \nabla_h \Delta_h ( \tilde{u}^{n+1} - u^n ) \|_2^2 )  . \nonumber
\end{align}
In particular, we notice that an additional dissipation term appears in the equality~\eqref{tilde u-H2-13-2}, and this dissipation term comes from the stabilization of $- \kappa \Delta_h u$ in the expansion~\eqref{space-discrete-2} for $f_\kappa (u)$.
As a result, a substitution of~\eqref{tilde u-H2-14-1}-\eqref{tilde u-H2-11} into \eqref{tilde u-H2-10-2} yields
\begin{align}
  &
  \|  \Delta_h \tilde{u}^{n+1} \|_2^2   - \| \Delta_h u^n \|_2^2
   +    \frac{\tau}{2} \| G_h^{(0)} \nabla_h \Delta_h^2 u^n \|_2^2
  + \frac{\tau}{8} \| G_h^{(0)} \nabla_h \Delta_h^2 \tilde{u}^{n+1} \|_2^2   \label{tilde u-H2-15}
\\
  & \qquad
  + (\kappa -1) \tau \| G^{(0)}_h \nabla_h \Delta_h ( \tilde{u}^{n+1} - u^n ) \|_2^2
   \le 8 \tau \tilde{C}_3^2 + 16 \tau C_3^2 \tilde{C}_1^2  .\nonumber
\end{align}
Consequently, the following bound becomes available for $\|  \Delta_h \tilde{u}^{n+1} \|_2$:
\begin{equation*}
  \|  \Delta_h \tilde{u}^{n+1} \|_2^2
  \le \| \Delta_h u^n \|_2^2  + 8 \tau \tilde{C}_3^2 + 16 \tau C_3^2 \tilde{C}_1^2
  \le ( 1 + 16 C_3^2 \tau ) \tilde{C}_1^2 + 8 \tau \tilde{C}_3^2 ,
\end{equation*}
in which the \textit{a~priori} estimate~\eqref{a priori-2} has been applied. Under the following $O (1)$ constraint for the time step size
\begin{equation}
  \tau \le \min \Big\{ \frac{1}{16} C_3^{-2}, \frac18 \tilde{C}_3^{-2} \Big\} ,  \label{condition-tau-1}
\end{equation}
we obtain a rough estimate for $\|  \Delta_h \tilde{u}^{n+1} \|_2$ as
\begin{equation*}
  \|  \Delta_h \tilde{u}^{n+1} \|_2^2
  \le  2 \tilde{C}_1^2  + 1 ,
  \, \, \,  \mbox{so that}  \, \,
  \|  \Delta_h \tilde{u}^{n+1} \|_2
  \le  \tilde{C}_4 := \sqrt{2 \tilde{C}_1^2  + 1} ,
\end{equation*}
where $\tilde{C}_4$ is  independent of $\kappa$. Similarly, an application of estimates~\eqref{lem 2-0-1} and \eqref{lem 2-0-2} in Lemma~\ref{lem:lem 2} yields the following nonlinear bounds at intermediate stage:
\begin{align}
&  \| \tilde{u}^{n+1} \|_\infty \le  C_2 ( | \overline{\tilde{u}^{n+1}} | + \| \Delta_h \tilde{u}^{n+1} \|_2 )
  \le C_2 ( | \beta_0 | + \tilde{C}_4 )  := \tilde{C}_5  ,   \label{a priori-5}
\\
&  \| \nabla_h ((\tilde{u}^{n+1})^3) \|_2 \le
  3 \| \tilde{u}^{n+1} \|_\infty^2 \cdot  C_3   \| \Delta_h \tilde{u}^{n+1} \|_2
  \le 3 \tilde{C}_5^2 C_3 \tilde{C}_4  := \tilde{C}_6  .   \label{a priori-6}
\end{align}
Again, both $\tilde{C}_5$ and $\tilde{C}_6$ are is independent of $\kappa$ and global-in-time
and $\tilde{C}_{5} \ge \tilde{C}_2$ since $\tilde{C}_{4} \ge \tilde{C}_1$.

By the way, the rough estimate~\eqref{tilde u-H2-15} also indicates
\begin{equation}
 (\kappa -1) \tau \| G^{(0)}_h \nabla_h \Delta_h ( \tilde{u}^{n+1} - u^n ) \|_2^2
   \le \tilde{C}_1^2 + ( 8 \tilde{C}_3^2 + 16 C_3^2 \tilde{C}_1^2 ) \tau
   \le  2 \tilde{C}_1^2 + 1 ,
   \label{a priori-6-2}
\end{equation}
under the constraint \eqref{condition-tau-1}.
This bound will be useful to the estimate in the next stage.

\subsection{A preliminary estimate for $\| u^{n+1} \|_{H_h^2}$}

In this section, we aim to obtain a rough $H_h^2$ estimate for the numerical solution $u^{n+1}$ at the next time step, determined by the second stage. Similarly, we denote $u^{n+1,*} = {\rm e}^{-\tau L_\kappa} u^n$, so that the evolutionary algorithm \eqref{ETDRK2-1} can be rewritten as a two-substage system:
\begin{align}
  \frac{u^{n+1,*} - u^n}{\tau} & = - L_\kappa \phi_1 (\tau L_\kappa)  u^n ,  \label{u n+1-H2-1-1}
\\
  \frac{u^{n+1} - u^{n+1,*}}{\tau} & =  \phi_1 (\tau L_\kappa) f_\kappa (u^n)
  + \phi_2 (\tau L_\kappa) ( f_\kappa (\tilde{u}^{n+1}) - f_\kappa (u^n) )  .  \label{u n+1-H2-1-2}
\end{align}

The equation~\eqref{u n+1-H2-1-1} is the same as~\eqref{tilde u-H2-1-1}, so that equality~\eqref{tilde u-H2-5} is still valid.

Taking a discrete $\ell^2$ inner product with (\ref{u n+1-H2-1-2}) by $2 \Delta_h^2 u^{n+1}$ leads to
\begin{align}
   \left\langle u^{n+1} - u^{n+1,*}  , 2 \Delta_h^2 u^{n+1} \right\rangle
  &   =2 \tau \langle G_h f_\kappa  (u^n) , \Delta_h^2 u^{n+1} \rangle
     - 2 \tau \langle G_h^{(1)} f_\kappa  (u^n) , \Delta_h^2 u^{n+1} \rangle  \nonumber
\\
   & \quad
     + 2 \tau \langle G_h^{(1)} f_\kappa  (\tilde{u}^{n+1}) , \Delta_h^2 u^{n+1} \rangle .
   \label{u n+1-H2-2}
\end{align}
The term on the left-hand side can be analyzed similarly as~\eqref{tilde u-H2-7} and \eqref{tilde u-H2-10-1}:
\begin{align*}
  &
   \left\langle  u^{n+1} - u^{n+1,*} , 2 \Delta_h^2 u^{n+1}  \right\rangle
   =
     \|  \Delta_h u^{n+1} \|_2^2  - \|  \Delta_h u^{n+1,*} \|_2^2
   + \|  \Delta_h ( u^{n+1} - u^{n+1,*} ) \|_2^2  , 
\\
  &
  \tau \langle G_h L_\kappa  u^n , \Delta_h^2 u^{n+1,*} \rangle
   + \|  \Delta_h ( u^{n+1} - u^{n+1,*} ) \|_2^2
   \ge \tau \| G_h^{(5)} u^{n+1} \|_2^2  ,
\end{align*}
and its combination with~\eqref{tilde u-H2-5} and \eqref{u n+1-H2-2} yields
\begin{align}
  &
  \|  \Delta_h u^{n+1} \|_2^2   - \| \Delta_h u^n \|_2^2
   +  \tau ( \| G_h^{(5)} u^n \|_2^2 + \| G_h^{(5)} u^{n+1} \|_2^2 )  \label{u n+1-H2-4}
\\
& \qquad \le
       2 \tau \langle G_h f_\kappa  (u^n) , \Delta_h^2 u^{n+1} \rangle
     + 2 \tau \langle G_h^{(1)} ( f_\kappa  (\tilde{u}^{n+1}) - f_\kappa (u^n)  ) ,
      \Delta_h^2 u^{n+1} \rangle  .\nonumber
\end{align}

The first term on the right-hand side of~\eqref{u n+1-H2-4} can be analyzed in a similar way as in~\eqref{tilde u-H2-11}-\eqref{tilde u-H2-13-2}; some technical details are skipped for the sake of brevity:
\begin{align}
  &
   2 \langle G_h f_\kappa  (u^n) , \Delta_h^2 u^{n+1} \rangle \nonumber
\\
  & \qquad   = 2 \langle G_h \Delta_h ((u^n)^3) , \Delta_h^2 u^{n+1} \rangle
    - 2 (\varepsilon + \kappa) \langle G_h \Delta_h u^n , \Delta_h^2 u^{n+1} \rangle , \nonumber
\\
  &
     2 \langle G_h \Delta_h ((u^n)^3) , \Delta_h^2 u^{n+1} \rangle  \label{u n+1-H2-5-2}
\\
  & \qquad     = - 2 \langle G_h \nabla_h ((u^n)^3) ,  \nabla_h \Delta_h^2 u^{n+1} \rangle    \nonumber
\\
  & \qquad \le
  2 \| \nabla_h ((u^n)^3) \|_2 \cdot
   \| G^{(0)}_h \nabla_h \Delta_h^2 u^{n+1} \|_2^2 \nonumber
\\
  & \qquad    \le 24  \tilde{C}_3^2
   + \frac{1}{24} \| G^{(0)}_h \nabla_h \Delta_h^2 u^{n+1} \|_2^2 ,  \nonumber
\\
  &
  - 2 ( 1+ \varepsilon) \langle G_h \Delta_h u^n , \Delta_h^2 u^{n+1} \rangle\nonumber
\\
 & \qquad  = 2 (1 + \varepsilon) \langle G^{(0)}_h \nabla_h u^n ,
   G^{(0)}_h \nabla_h \Delta_h^2 u^{n+1} \rangle   \nonumber
\\
 & \qquad \le
     2 (1 + \varepsilon) C_3 \| \Delta_h u^n \|_2 \cdot
   \| G^{(0)}_h \nabla_h \Delta_h^2 u^{n+1} \|_2^2   \nonumber
\\
 & \qquad  \le 32  C_3^2 \tilde{C}_1^2
   + \frac18 \| G^{(0)}_h \nabla_h \Delta_h^2 u^{n+1} \|_2^2 , \nonumber
\\
  &
  - 2 ( \kappa -1) \langle G_h \Delta_h u^n , \Delta_h^2 u^{n+1} \rangle \label{u n+1-H2-5-4}
\\
 & \qquad   =  2 (\kappa -1) \langle G_h \nabla_h \Delta_h u^n ,
     \nabla_h \Delta_h u^{n+1} \rangle  \nonumber
\\
 & \qquad =
    2 (\kappa -1) \langle G^{(0)}_h \nabla_h \Delta_h u^n ,
   G^{(0)}_h \nabla_h \Delta_h u^{n+1} \rangle   \nonumber
\\
 & \qquad  =
    (\kappa -1) ( \| G^{(0)}_h \nabla_h \Delta_h u^n \|_2^2
  +  \| G^{(0)}_h \nabla_h \Delta_h u^{n+1} \|_2^2 ) \nonumber
\\
& \qquad\qquad
  -  (\kappa -1) \| G^{(0)}_h \nabla_h \Delta_h ( u^{n+1} - u^n ) \|_2^2 )  ,  \nonumber
\end{align}
so that
\begin{align}
& 2 \tau \langle G_h f_\kappa  (u^n) , \Delta_h^2 u^{n+1} \rangle
\le (24  \tilde{C}_3^2 + 32  C_3^2 \tilde{C}_1^2) \tau
   + \frac{\tau}{6} \| G^{(0)}_h \nabla_h \Delta_h^2 u^{n+1} \|_2^2 \label{u n+1-H2-5-5}
\\
& \qquad\qquad\quad +  (\kappa -1) \tau ( \| G^{(0)}_h \nabla_h \Delta_h u^n \|_2^2
  +  \| G^{(0)}_h \nabla_h \Delta_h u^{n+1} \|_2^2 ) \nonumber
\\
& \qquad\qquad\quad   -  (\kappa -1) \tau ( \| G^{(0)}_h \nabla_h \Delta_h ( u^{n+1} - u^n ) \|_2^2 ) .\nonumber
\end{align}
For the second term on the right-hand side of~\eqref{u n+1-H2-4}, we begin with the following decomposition:
\begin{align}
 &  2  \langle G_h^{(1)} ( f_\kappa  (\tilde{u}^{n+1}) - f_\kappa (u^n)  ) ,
      \Delta_h^2 u^{n+1} \rangle  \label{u n+1-H2-6}
\\
  & \qquad  = 2  \langle G_h^{(1)} \Delta_h (  (\tilde{u}^{n+1})^3 ) ,
      \Delta_h^2 u^{n+1} \rangle
      - 2  \langle G_h^{(1)} \Delta_h ( (u^n)^3 ) ,
      \Delta_h^2 u^{n+1} \rangle  \nonumber
\\
  & \qquad \quad
      - 2  (\varepsilon + \kappa) \langle G_h^{(1)} \Delta_h ( \tilde{u}^{n+1} - u^n  ) ,
      \Delta_h^2 u^{n+1} \rangle . \nonumber
\end{align}
The two nonlinear inner product terms can be bounded in the same fashion as in~\eqref{tilde u-H2-12}, \eqref{u n+1-H2-5-2}, combined with the help of inequality~\eqref{prop-1-0-6} in Proposition~\ref{prop:prop 1}:
\begin{align}
  &
  2  \langle G_h^{(1)} \Delta_h (  (\tilde{u}^{n+1})^3 ) ,
      \Delta_h^2 u^{n+1} \rangle    \label{u n+1-H2-7-1}
\\
 & \qquad  = 2  \langle G_h^{(3)} G_h^{(4)} \Delta_h (  (\tilde{u}^{n+1})^3 ) ,
      G_h^{(0)} \Delta_h^2 u^{n+1} \rangle     \nonumber
\\
  & \qquad =
  - 2  \langle G_h^{(3)} G_h^{(4)} \nabla_h (  (\tilde{u}^{n+1})^3 ) ,
      G_h^{(0)} \nabla_h \Delta_h^2 u^{n+1} \rangle \nonumber
\\
  & \qquad   \le 2  \| G_h^{(3)} G_h^{(4)} \nabla_h (  (\tilde{u}^{n+1})^3 ) \|_2
      \cdot \| G_h^{(0)} \nabla_h \Delta_h^2 u^{n+1} \|_2    \nonumber
\\
  & \qquad \le
  \sqrt{2}  \| \nabla_h (  (\tilde{u}^{n+1})^3 ) \|_2
      \cdot \| G_h^{(0)} \nabla_h \Delta_h^2 u^{n+1} \|_2   \nonumber
\\
  & \qquad \le
  \sqrt{2}  \tilde{C}_6  \| G_h^{(0)} \nabla_h \Delta_h^2 u^{n+1} \|_2   \quad
  \mbox{(by the \textit{a~priori} estimate~\eqref{a priori-6})}   \nonumber
\\
  & \qquad \le
  12  \tilde{C}_6^2
   + \frac{1}{24} \| G^{(0)}_h \nabla_h \Delta_h^2 u^{n+1} \|_2^2 , \nonumber
\end{align}
and
\begin{equation}
  - 2  \langle G_h^{(1)} \Delta_h (  (u^n)^3 ) ,
      \Delta_h^2 u^{n+1} \rangle
  \le 12  \tilde{C}_3^2
   + \frac{1}{24} \| G^{(0)}_h \nabla_h \Delta_h^2 u^{n+1} \|_2^2 .  \label{u n+1-H2-7-2}
\end{equation}
Again, the linear diffusion inner product on the right-hand side of~\eqref{u n+1-H2-6} is split into two parts, and we analyze them separately:
\begin{align}
  &
  - 2  (1 +\varepsilon) \langle G_h^{(1)} \Delta_h ( \tilde{u}^{n+1} - u^n  ) ,
      \Delta_h^2 u^{n+1} \rangle   \label{u n+1-H2-8-1}
\\
& \qquad  = - 2  (1 + \varepsilon) \langle G_h^{(3)} G_h^{(4)} \Delta_h (  \tilde{u}^{n+1} - u^n ) ,
      G_h^{(0)} \Delta_h^2 u^{n+1} \rangle     \nonumber
\\
  &\qquad  =
   2  (1 + \varepsilon) \langle G_h^{(3)} G_h^{(4)} \nabla_h (  \tilde{u}^{n+1} - u^n ) ,
      G_h^{(0)} \nabla_h \Delta_h^2 u^{n+1} \rangle
      \nonumber
\\
  & \qquad \le
  \sqrt{2} (1 + \varepsilon) \| \nabla_h (  \tilde{u}^{n+1} - u^n ) \|_2
      \cdot \| G_h^{(0)} \nabla_h \Delta_h^2 u^{n+1} \|_2   \nonumber
\\
  & \qquad \le
   2 \sqrt{2} C_3  \| \Delta_h (  \tilde{u}^{n+1} - u^n ) \|_2
      \cdot \| G_h^{(0)} \nabla_h \Delta_h^2 u^{n+1} \|_2  \nonumber
\\
  & \qquad \le
  2 \sqrt{2}  C_3 ( \tilde{C}_2 + \tilde{C}_4)  \| G_h^{(0)} \nabla_h \Delta_h^2 u^{n+1} \|_2   \quad
  \mbox{(by \eqref{a priori-2} and \eqref{a priori-5})}   \nonumber
\\
  & \qquad \le
  12  C_3^2 ( \tilde{C}_2 + \tilde{C}_4)^2
   + \frac{1}{6} \| G^{(0)}_h \nabla_h \Delta_h^2 u^{n+1} \|_2^2 ,  \nonumber
\end{align}
and
\begin{align}
  &
  - 2  (\kappa -1) \langle G_h^{(1)} \Delta_h ( \tilde{u}^{n+1} - u^n  ) ,
      \Delta_h^2 u^{n+1} \rangle  \label{u n+1-H2-8-2}
\\
  & \qquad  = - 2  (\kappa -1) \langle G_h^{(3)} \Delta_h (  \tilde{u}^{n+1} - u^n ) ,
      G_h^{(3)} \Delta_h^2 u^{n+1} \rangle     \nonumber
\\
  & \qquad =
   2  (\kappa -1 ) \langle G_h^{(3)} \nabla_h \Delta_h (  \tilde{u}^{n+1} - u^n ) ,
      G_h^{(3)} \nabla_h \Delta_h u^{n+1} \rangle      \nonumber
\\
  & \qquad \le
  2  (\kappa -1 ) \| G_h^{(3)} \nabla_h \Delta_h (  \tilde{u}^{n+1} - u^n ) \|_2^2
      + \frac{\kappa -1}{2} \| G_h^{(3)} \nabla_h \Delta_h u^{n+1} \|_2^2   \nonumber
\\
  & \qquad \le
  2  (\kappa -1 ) \| G_h^{(0)} \nabla_h \Delta_h (  \tilde{u}^{n+1} - u^n ) \|_2^2
      + \frac{\kappa -1}{2} \| G_h^{(0)} \nabla_h \Delta_h u^{n+1} \|_2^2 , \nonumber
\end{align}
in which the preliminary inequality \eqref{prop-1-0-4} has been applied in the last step of \eqref{u n+1-H2-8-2}. Furthermore, to obtain a bound for the second term on the right-hand side of~\eqref{u n+1-H2-8-2}, we make use of the Cauchy inequality:
\begin{equation}
  \frac{\kappa -1}{2} \| G_h^{(0)} \nabla_h \Delta_h u^{n+1} \|_2^2
  \le ( \kappa -1 ) ( \| G_h^{(0)} \nabla_h \Delta_h u^n \|_2^2
   + \| G_h^{(0)} \nabla_h \Delta_h ( u^{n+1} - u^n ) \|_2^2 ) .
  \label{u n+1-H2-8-3}
\end{equation}
It is noticed that the second term on the right-hand side of~\eqref{u n+1-H2-8-3} can be balanced by the stabilization estimate in \eqref{u n+1-H2-5-4}. On the other hand, motivated by the Sobolev interpolation inequality (which comes from~\eqref{prop-1-0-5} in Proposition~\ref{prop:prop 1})
\begin{equation*}
  \| G^{(0)}_h \nabla_h \Delta_h u^n \|_2
  \le \| G^{(0)}_h \Delta_h u^n \|_2^\frac23 \cdot \| G^{(0)}_h \nabla_h \Delta_h^2 u^n \|_2^\frac13 ,
\end{equation*}
an application of  Young's inequality indicates that
\begin{align*}
   \| G^{(0)}_h \nabla_h \Delta_h u^n \|_2^2
&  \le
    \| G^{(0)}_h \Delta_h u^n \|_2^\frac43
   \cdot \| G^{(0)}_h \nabla_h \Delta_h^2 u^n \|_2^\frac23  \nonumber
\\
&   \le  \| \Delta_h u^n \|_2^\frac43
   \cdot \| G^{(0)}_h \nabla_h \Delta_h^2 u^n \|_2^\frac23  \nonumber
\\
  & \le
  \frac23 \alpha^{-\frac32} \| \Delta_h u^n \|_2^2
  + \frac13 \alpha^3 \| G^{(0)}_h \nabla_h \Delta_h^2 u^n \|_2^2 , \quad \forall \, \alpha > 0 .
\end{align*}
By taking $\frac13 \alpha^3 = \frac{1}{2 (\kappa -1)}$, so that $\alpha = ( \frac32 )^\frac13 (\kappa -1)^{-\frac13}$, we get
\begin{align}
   (\kappa -1) \| G^{(0)}_h \nabla_h \Delta_h u^n \|_2^2
&  \le
  \frac{2 (\kappa -1)}{3} \alpha^{-\frac32} \| \Delta_h u^n \|_2^2
  + \frac12 \| G^{(0)}_h \nabla_h \Delta_h^2 u^n \|_2^2 \label{u n+1-H2-8-6}
\\
  &  =
  \frac{2 \sqrt{6}}{9} (\kappa -1)^\frac32 \| \Delta_h u^n \|_2^2
  + \frac12 \| G^{(0)}_h \nabla_h \Delta_h^2 u^n \|_2^2 \nonumber
\\
  &  \le \frac{2 \sqrt{6}}{9} \tilde{C}_1^2 (\kappa -1)^\frac32
  + \frac12 \| G^{(0)}_h \nabla_h \Delta_h^2 u^n \|_2^2 , \nonumber
\end{align}
in which the \textit{a~priori} estimate~\eqref{a priori-2} has been applied in the last step. Subsequently, a substitution of~\eqref{u n+1-H2-8-6} into \eqref{u n+1-H2-8-3} gives
\begin{align}
\frac{\kappa -1}{2} \| G_h^{(0)} \nabla_h \Delta_h u^{n+1} \|_2^2
&   \le \frac{2 \sqrt{6}}{9} \tilde{C}_1^2 (\kappa -1)^\frac32
  + \frac12 \| G^{(0)}_h \nabla_h \Delta_h^2 u^n \|_2^2   \label{u n+1-H2-8-7}
\\
  & \quad
   + (\kappa -1) \| G_h^{(0)} \nabla_h \Delta_h ( u^{n+1} - u^n ) \|_2^2  . \nonumber
\end{align}
Meanwhile, we recall the \textit{a~priori} estimate~\eqref{a priori-6-2} at the previous stage. As a result, a substitution of \eqref{a priori-6-2} and \eqref{u n+1-H2-8-7} into \eqref{u n+1-H2-8-2} leads to
\begin{align}
  &
  - 2  (\kappa -1) \tau \langle G_h^{(1)} \Delta_h ( \tilde{u}^{n+1} - u^n  ) ,
      \Delta_h^2 u^{n+1} \rangle   \label{u n+1-H2-8-8}
\\
  & \qquad \le
  4 \tilde{C}_1^2 + 2 + \frac{2 \sqrt{6}}{9} \tilde{C}_1^2 \tau (\kappa -1)^\frac32
  + \frac{\tau}{2} \| G^{(0)}_h \nabla_h \Delta_h^2 u^n \|_2^2 \nonumber
\\
  & \qquad\quad
   + (\kappa -1) \tau \| G_h^{(0)} \nabla_h \Delta_h ( u^{n+1} - u^n ) \|_2^2 .  \nonumber
\end{align}
Consequently, a combination of~\eqref{u n+1-H2-8-1} and \eqref{u n+1-H2-8-8} gives
\begin{align}
  &
  - 2  (\varepsilon + \kappa) \tau \langle G_h^{(1)} \Delta_h ( \tilde{u}^{n+1} - u^n  ) ,
      \Delta_h^2 u^{n+1} \rangle   \label{u n+1-H2-8-9}
\\
  & \qquad \le
  4 \tilde{C}_1^2 + 2 + 12  C_3^2 ( \tilde{C}_2 + \tilde{C}_4)^2  \tau
   + \frac{\tau}{6} \| G^{(0)}_h \nabla_h \Delta_h^2 u^{n+1} \|_2^2  \nonumber
\\
  & \qquad \quad
  + \frac{2 \sqrt{6}}{9} \tilde{C}_1^2 \tau (\kappa -1)^\frac32
  + \frac{\tau}{2} \| G^{(0)}_h \nabla_h \Delta_h^2 u^n \|_2^2 \nonumber
\\
  & \qquad \quad
   + (\kappa -1) \tau \| G_h^{(0)} \nabla_h \Delta_h ( u^{n+1} - u^n ) \|_2^2 . \nonumber
\end{align}
Furthermore, a combination of \eqref{u n+1-H2-7-1}, \eqref{u n+1-H2-7-2} and \eqref{u n+1-H2-8-9} results in
\begin{align}
  &
   2  \tau \langle G_h^{(1)} ( f_\kappa  (\tilde{u}^{n+1}) - f_\kappa (u^n)  ) ,
      \Delta_h^2 u^{n+1} \rangle    \label{u n+1-H2-8-10}
\\
  & \qquad \le
    4 \tilde{C}_1^2 + 2
   + 12  ( \tilde{C}_3^2 + \tilde{C}_6^2 + C_3^2 ( \tilde{C}_2 + \tilde{C}_4)^2 ) \tau
   + \frac{\tau}{4} \| G^{(0)}_h \nabla_h \Delta_h^2 u^{n+1} \|_2^2  \nonumber
\\
  & \qquad \quad
  + \frac{2 \sqrt{6}}{9} \tilde{C}_1^2 \tau (\kappa -1)^\frac32
  + \frac{\tau}{2} \| G^{(0)}_h \nabla_h \Delta_h^2 u^n \|_2^2 \nonumber
\\
  & \qquad \quad
   + (\kappa -1) \tau \| G_h^{(0)} \nabla_h \Delta_h ( u^{n+1} - u^n ) \|_2^2 . \nonumber
\end{align}

Therefore, a substitution of \eqref{u n+1-H2-5-5} and \eqref{u n+1-H2-8-10}  into \eqref{u n+1-H2-4} yields
\begin{align*}
  &
  \|  \Delta_h u^{n+1} \|_2^2   - \| \Delta_h u^n \|_2^2
  +  \tau ( \| G^{(5)}_h u^{n+1} \|_2^2 +  \| G^{(5)}_h u^n \|_2^2 )
\\
   & \quad \le
    4 \tilde{C}_1^2 + 2
   + ( 36 \tilde{C}_3^2 + 12 \tilde{C}_6^2 + 32 C_3^2 \tilde{C}_1^2
   + 12 C_3^2 ( \tilde{C}_2 + \tilde{C}_4)^2 ) \tau
   + \frac{2 \sqrt{6}}{9} \tilde{C}_1^2 \tau (\kappa -1)^\frac32
\\
  & \quad \quad
  + (\kappa -1) \tau ( \| G^{(0)}_h \nabla_h \Delta_h u^n \|_2^2
  +  \| G^{(0)}_h \nabla_h \Delta_h u^{n+1} \|_2^2 )
\\
& \quad \quad
   + \frac{5 \tau}{12} \| G^{(0)}_h \nabla_h \Delta_h^2 u^{n+1} \|_2^2
  + \frac{\tau}{2} \| G^{(0)}_h \nabla_h \Delta_h^2 u^n \|_2^2  .
\end{align*}
We notice that the term $(\kappa -1) \tau \| G_h^{(0)} \nabla_h \Delta_h ( u^{n+1} - u^n ) \|_2^2$ has been balanced between~\eqref{u n+1-H2-5-4} and \eqref{u n+1-H2-8-10}, which   played an important role in the derivation. In addition, the two diffusion estimate terms have the following lower bounds, as given by inequality~\eqref{prop-1-0-1} in Proposition~\ref{prop:prop 1}:
\begin{align*}
  \| G_h^{(5)} u^n \|_2^2 + \| G_h^{(5)} u^{n+1} \|_2^2
 &  \ge \frac12 ( \| G_h^{(0)} \nabla_h \Delta_h^2 u^n \|_2^2
  + \| G_h^{(0)} \nabla_h \Delta_h^2 u^{n+1} \|_2^2 )
\\
  & \quad
  + (\kappa -1) ( \| G_h^{(0)} \nabla_h  \Delta_h u^n \|_2^2
  + \| G_h^{(0)} \nabla_h  \Delta_h u^{n+1} \|_2^2 ) .
\end{align*}
Then we arrive at
\begin{align*}
  &
  \|  \Delta_h u^{n+1} \|_2^2   - \| \Delta_h u^n \|_2^2
  +  \frac{\tau}{12} \| G_h^{(0)} \nabla_h \Delta_h^2 u^{n+1} \|_2^2
\\
   & \quad \le
    4 \tilde{C}_1^2 + 2
   + ( 36 \tilde{C}_3^2 + 12 \tilde{C}_6^2 + 32 C_3^2 \tilde{C}_1^2
   + 12 C_3^2 ( \tilde{C}_2 + \tilde{C}_4)^2 ) \tau
  + \frac{2 \sqrt{6}}{9} \tilde{C}_1^2 \tau (\kappa -1)^\frac32 ,
\end{align*}
so that the following bound becomes available for $\|  \Delta_h u^{n+1} \|_2$:
\begin{equation}
\begin{aligned}
  &
  \|  \Delta_h u^{n+1} \|_2^2
  \le  5 \tilde{C}_1^2  +2  + \frac{2 \sqrt{6}}{9} \tilde{C}_1^2 \tau (\kappa -1)^\frac32
  + \tilde{C}_8 \tau ,
\\
  &
  \tilde{C}_8 =  36 \tilde{C}_3^2 + 12 \tilde{C}_6^2 + 32 C_3^2 \tilde{C}_1^2
   + 12 C_3^2 ( \tilde{C}_2 + \tilde{C}_4)^2  .
\end{aligned}
    \label{u n+1-H2-9-4}
\end{equation}
Under the following $O (1)$ constraint for the time step size
\begin{equation}
  \tau \le \min \Big\{ \kappa^{-\frac32} , \frac{1}{32} C_3^{-2}, \frac{1}{36} \tilde{C}_3^{-2} ,
  \frac{1}{12} \tilde{C}_6^{-2} \Big\} ,
  \label{condition-tau-2}
\end{equation}
which is a stronger requirement than~\eqref{condition-tau-1}, a rough estimate can be derived for $\|  \Delta_h u^{n+1} \|_2$:
\begin{align*}
  &
  \|  \Delta_h u^{n+1} \|_2^2
  \le  6 \tilde{C}_1^2 + 2 + \tilde{C}_8 \tau
  \le 7 \tilde{C}_1^2 + 4 + \frac38 (\tilde{C}_2 + \tilde{C}_4 )^2 ,
  \, \, \,  \mbox{so that}
\\
  &
  \|  \Delta_h u^{n+1} \|_2
  \le  \tilde{C}_9 := \sqrt{7 \tilde{C}_1^2 + 4 + \frac38 (\tilde{C}_2 + \tilde{C}_4 )^2} . 
\end{align*}
Notice that $\tilde{C}_9$ is independent of $\kappa$ and time. Again, an application of estimate~\eqref{lem 2-0-1} in Lemma~\ref{lem:lem 2} implies the following $\| \cdot \|_\infty$ bound at time-step $t^{n+1}$:
\begin{align}
  \| u^{n+1} \|_\infty \le & C_2 ( | \overline{u^{n+1}} | + \| \Delta_h u^{n+1} \|_2 )
  \le C_2 ( | \beta_0 | + \tilde{C}_9 )  := \tilde{C}_{10}  .  \label{a priori-7}
\end{align}
Obviously, the constant $\tilde{C}_{10}$ is independent of $\kappa$ and global-in-time, moreover
$\tilde{C}_{10} \ge \tilde{C}_5$ since $\tilde{C}_{9} \ge \tilde{C}_4$.

\subsection{Justification of the artificial parameter $\kappa$ and the \textit{a~priori} assumption~\eqref{a priori-1}}

By making a comparison between the $\| \cdot \|_\infty$ bounds for $u^n$, $\tilde{u}^{n+1}$ and $u^{n+1}$,  given by~\eqref{a priori-3}, \eqref{a priori-5} and \eqref{a priori-7}, respectively, it is  evident that $\tilde{C}_{10} \ge \tilde{C}_5 \ge \tilde{C}_2$. In other words, the defined constant $\tilde{C}_{10}$ is greater than the maximum of $\| u^n \|_\infty$, $\| \tilde{u}^{n+1} \|_\infty$ and $\| u^{n+1} \|_\infty$. Subsequently, we proceed with
\begin{equation}
  \kappa = \max \Big( \frac{3 \tilde{C}_{10}^2 - \varepsilon}{2} , 1 \Big) . 
  \label{condition-kappa-1}
\end{equation}
Notice that $\kappa$ is an $O (1)$ constant, and it contains no singular dependence on any physical parameter. With this fixed choice of $\kappa$,  we can take the time step size $\tau$ satisfying~\eqref{condition-tau-2},  thereby enabling the availability of the energy stability estimate  for the ETDRK2 scheme~\eqref{ETDRK2} at the next time step, by Lemma~\ref{thm: energy stab-1}, i.e.,
\begin{equation}
  E_h ( u^{n+1} ) \le E_h (u^n) \le E_h (u^0) = \tilde{C}_0 .
  \label{a priori-8}
\end{equation}
This in turn recovers the \textit{a~priori} assumption~\eqref{a priori-1} at the next time step, so that an induction argument can be effectively applied. Therefore, we have proved the following theorem, which represents the primary theoretical result of this article.

\begin{theorem} \label{thm: energy stab-2}
With the choice of the artificial parameter in~\eqref{condition-kappa-1},  depending only on the initial energy and the domain $\Omega$, we take the time step size satisfying the $O(1)$ constraint~\eqref{condition-tau-2}. The numerical solution $\left\{ u^n \right\}_{0 \le n \le N_T}$ generated by the ETDRK2 scheme~\eqref{ETDRK2} satisfies $E_h (u^{n+1}) \le E_h (u^n)$.
\end{theorem}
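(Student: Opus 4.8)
The plan is a proof by mathematical induction on $n$, with the induction hypothesis being exactly the \textit{a~priori} assumption \eqref{a priori-1}, that is, $E_h(u^n)\le\tilde{C}_0$. The base case $n=0$ is trivial. For the inductive step, I would first convert the energy bound into an $H_h^2$ bound: the quartic term in $E_h$ is nonnegative, and the indefinite term $-\|\nabla_h u^n\|_2^2$ is absorbed by Young's inequality into a fraction of $\|\Delta_h u^n\|_2^2$ plus a multiple of $\|u^n\|_2^2$, which yields \eqref{a priori-2}. Combined with the discrete mass conservation $\overline{u^n}=\beta_0$ and Lemma~\ref{lem:lem 2}, this delivers the uniform $\ell^\infty$ bound \eqref{a priori-3} and the cubic bound \eqref{a priori-4}, all with constants depending only on $\tilde{C}_0$ and $\Omega$.

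Next I would carry out the two single-step $H_h^2$ estimates. For the intermediate stage, rewrite \eqref{ETDRK2-2} in the two-substage form \eqref{tilde u-H2-1-1}--\eqref{tilde u-H2-1-2} through the auxiliary variable $u^{n+1,*}=\mathrm{e}^{-\tau L_\kappa}u^n$, test the two equations against $\Delta_h^2(u^{n+1,*}+u^n)$ and $2\Delta_h^2\tilde{u}^{n+1}$ respectively, and invoke the summation-by-parts formulas of Lemma~\ref{lemma1} together with the diffusion lower bounds \eqref{prop-1-0-1} and \eqref{prop-2-0} (which themselves rest on the eigenvalue analysis of the $\phi$-function operators in the Fourier pseudo-spectral space). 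The nonlinear and lower-order linear terms on the right are handled by \eqref{prop-1-0-4}, \eqref{prop-1-0-6}, the \textit{a~priori} bounds, and Young's inequality; the decisive point is that the stabilization $-\kappa\Delta_h u$ inside $f_\kappa$ produces an extra dissipation term $(\kappa-1)\tau\|G_h^{(0)}\nabla_h\Delta_h(\tilde{u}^{n+1}-u^n)\|_2^2$ on the left-hand side. Under the $O(1)$ constraint \eqref{condition-tau-1} this gives $\|\Delta_h\tilde{u}^{n+1}\|_2\le\tilde{C}_4$ together with the auxiliary bound \eqref{a priori-6-2}, hence $\|\tilde{u}^{n+1}\|_\infty\le\tilde{C}_5$ and $\|\nabla_h((\tilde{u}^{n+1})^3)\|_2\le\tilde{C}_6$, all independent of $\kappa$.

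The same procedure applied to the second stage \eqref{ETDRK2-1} should yield $\|\Delta_h u^{n+1}\|_2\le\tilde{C}_9$ and $\|u^{n+1}\|_\infty\le\tilde{C}_{10}$, and this is the step I expect to be the main obstacle. The difficulty is that the cross-stage term $2\tau\langle G_h^{(1)}(f_\kappa(\tilde{u}^{n+1})-f_\kappa(u^n)),\Delta_h^2 u^{n+1}\rangle$ carries a factor $\kappa-1$ in its linear part, which naively would inject $\kappa$ into the constant $\tilde{C}_{10}$ and thereby into the very threshold in \eqref{condition-kappa-1} that $\kappa$ is required to exceed — a circular situation. The resolution is twofold: the portion involving $\tilde{u}^{n+1}-u^n$ is absorbed by \eqref{a priori-6-2}, while for the term $(\kappa-1)\|G_h^{(0)}\nabla_h\Delta_h u^{n+1}\|_2^2$ one uses the Sobolev interpolation \eqref{prop-1-0-5} followed by Young's inequality with a parameter $\alpha\sim(\kappa-1)^{-1/3}$, which converts the $\kappa$-dependence into a benign term of size $(\kappa-1)^{3/2}\tau$ controlled by the constraint $\tau\le\kappa^{-3/2}$ in \eqref{condition-tau-2}. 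One must also verify that the term $(\kappa-1)\tau\|G_h^{(0)}\nabla_h\Delta_h(u^{n+1}-u^n)\|_2^2$ cancels exactly against the stabilization dissipation produced in the analogue of \eqref{tilde u-H2-13-2}; after absorbing the remaining high-order dissipation into the $G_h^{(5)}$ lower bounds from \eqref{prop-1-0-1}, the estimate closes under \eqref{condition-tau-2}.

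Finally, I would close the induction. Since $\tilde{C}_{10}\ge\tilde{C}_5\ge\tilde{C}_2$, the constant $\tilde{C}_{10}$ dominates $M_0=\max(\|u^n\|_\infty,\|\tilde{u}^{n+1}\|_\infty,\|u^{n+1}\|_\infty)$, so the choice \eqref{condition-kappa-1} of $\kappa$ satisfies $\kappa\ge(3M_0^2-\varepsilon)/2$. Lemma~\ref{thm: energy stab-1} then gives $E_h(u^{n+1})\le E_h(u^n)\le\tilde{C}_0$, which is precisely the \textit{a~priori} assumption \eqref{a priori-1} at step $n+1$. This recovers the hypothesis, completes the inductive step, and hence establishes the theorem.
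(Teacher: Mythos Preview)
Your proposal is correct and follows essentially the same route as the paper: the induction on the \textit{a~priori} assumption~\eqref{a priori-1}, the two-substage decomposition with the indicated test functions, the use of Propositions~\ref{prop:prop 1}--\ref{prop:prop 2} for the diffusion lower bounds, and in particular your identification of the circular-$\kappa$ difficulty in the second stage together with its resolution via~\eqref{a priori-6-2}, the interpolation~\eqref{prop-1-0-5} with Young's parameter $\alpha\sim(\kappa-1)^{-1/3}$, and the exact cancellation of $(\kappa-1)\tau\|G_h^{(0)}\nabla_h\Delta_h(u^{n+1}-u^n)\|_2^2$ all match the paper's argument precisely. The only cosmetic difference is that the paper states~\eqref{a priori-2} directly rather than sketching the Young-inequality absorption of $-\|\nabla_h u^n\|_2^2$, but your reasoning there is sound.
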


\subsection{A refined estimate for $\| u^{n+1} \|_{H_h^2}$ and $\| u^{n+1} \|_\infty$}

We notice that the $H_h^2$ estimate~\eqref{u n+1-H2-9-4} for the numerical solution $u^{n+1}$, as well as the maximum norm estimate~\eqref{a priori-7}, is based on a direct analysis for the semi-implicit numerical scheme~\eqref{ETDRK2}, with the help of extensive applications of discrete Sobolev embedding. However, this estimate turns out to be too rough since we did not make use of the variational energy structure in the analysis. In fact, to obtain the energy dissipation at a theoretical level, an $\| \cdot \|_\infty$ bound of the numerical solution at the next time step has to be derived, due to the nonlinear term involved. Such a bound can only be possibly accomplished by a direct $H_h^2$ estimate without using the energy structure. Since the $\| \Delta_h u^{n+1} \|_2$ bound in~\eqref{u n+1-H2-9-4} contains a multiple factor of the $\| \Delta_h u^n \|_2$ bound in~\eqref{a priori-2}, this estimate cannot be used in the induction. On the other hand, with such a rough bound at hand, we are able to justify the artificial parameter value in~\eqref{condition-kappa-1}, so that the energy stability becomes theoretically available at the next time step. With a theoretical justification of the energy stability analysis, we are able to obtain a much sharper $\| \cdot \|_{H_h^2}$ and $\| \cdot \|_\infty$ bound for the numerical solution $u^{n+1}$.

More specifically, with the energy stability~\eqref{a priori-8} theoretically justified at the next time step, we apply a similar analysis as in~\eqref{a priori-2} and obtain
\begin{equation}
  \| \Delta_h u^{n+1} \|_2 \le \tilde{C}_1:= 2 \Big( \tilde{C}_0 + | \Omega | \Big)^\frac12  ,
  \label{a priori-9}
\end{equation}
which is a time-independent constant. Obviously, this bound turns out to be a much sharper estimate compared with the rough $H_h^2$ estimate \eqref{u n+1-H2-9-4}, since the variational energy structure has not been applied in the derivation of \eqref{u n+1-H2-9-4}. In turn, with the  aid of inequality~\eqref{lem 2-0-1} in Lemma~\ref{lem:lem 2}, a much sharper maximum norm bound for $u^{n+1}$ also becomes available:
\begin{equation}
  \| u^{n+1} \|_\infty \le  C_2 ( | \overline{u^{n+1}} | + \| \Delta_h u^{n+1} \|_2 )
  \le C_2 ( | \beta_0 | + \tilde{C}_1 )  := \tilde{C}_2  .  \label{a priori-10}
\end{equation}
Note that the $\| \cdot \|_{H_h^2}$ bound $\tilde{C}_1$ and the $\| \cdot \|_\infty$ bound $\tilde{C}_2$ are both global-in-time quantities.

\begin{remark}
In the rough $\| \cdot \|_{H_h^2}$ estimate
for the numerical solution $u^{n+1}$, if the inequality~\eqref{u n+1-H2-8-10} is derived in an alternate way, we are able to obtain $\| \Delta_h u^{n+1} \|_2^2 = \| \Delta_h u^n \|_2^2 + O (\tau ( 1+ \kappa^2))$ for a fixed  value of $\kappa$. In other words, even for the rough $\| \cdot \|_{H_h^2}$ estimate, a reasonable result can be available. The reason why the $\| \Delta_h u^{n+1} \|_2$ bound~\eqref{u n+1-H2-9-4} contains a multiple factor of $\| \Delta_h u^n \|_2$ is due to the fact that, the value of $\kappa$ has not been fixed in the rough estimate, so that we need the time step size $\tau$ to balance the quantity of $\kappa^\frac32$.

Meanwhile, all these derivations are rough estimates. With the bound~\eqref{a priori-7}, we are able to choose $\kappa$ as in~\eqref{condition-kappa-1}, so that the energy stability becomes theoretically available.  Consequently, much sharper estimates~\eqref{a priori-9} and \eqref{a priori-10} can be derived.
\end{remark}

\begin{remark}
The stabilization approach has been extensively applied in the numerical design for various gradient flow models, in which an artificial regularization term ensures the energy stability at a theoretical level while preserving the desired accuracy order. For the gradient models with automatic Lipschitz continuity for the nonlinear term, such as the no-slope-selection  thin film growth equation, the stabilization approach has been widely studied~\cite{chen12, chen20a, Hao2021, LiW18, Meng2020}, and the energy stability can be proved in a straightforward way. For the gradient models without  automatic Lipschitz continuity for the nonlinear part, a theoretical analysis of the maximum norm of the numerical solution is needed to establish the energy stability analysis. For example, a local-in-time convergence analysis and energy stability estimate has been provided for the stabilization schemes to the nonlocal Cahn--Hilliard equation~\cite{LiX2021, LiX2023b, LiX2023a}, in both the first and second accuracy orders. The local-in-time nature of these analyses comes from a lack of global-in-time regularity for the nonlocal gradient model. For the standard Cahn--Hilliard equation, a cut-off approach is applied in the pioneering work~\cite{shen10a}, while a theoretical justification of the artificial regularization parameter is available in the associated works~\cite{LiD2017, LiD2017b, LiD2016a}, in which a single-step $\ell^\infty$ analysis for the numerical solution is provided to establish the energy stability analysis.

Meanwhile, all the existing works on energy stability analysis for the multi-step numerical schemes are in terms of a modified energy functional,   consisting of the original free energy and a few numerical correction terms. In comparison, this article represents the first effort to theoretically establish a global-in-time energy stability analysis for a second-order stabilized numerical scheme, in terms of the original free energy functional, which comes from the single-step Runge--Kutta style of the numerical algorithm.
\end{remark}


\section{Concluding remarks}  \label{sec:conclusion}

A second-order accurate, exponential time differencing Runge--Kutta (ETDRK2) numerical scheme for the phase field crystal (PFC) equation is analyzed in detail, and a  global-in-time energy estimate is derived. Such an energy stability has been proven for the ETDRK2 numerical scheme to the PFC equation under a global Lipschitz constant assumption. To accomplish this goal, an \textit{a~priori} assumption is made at the previous time step, and a single-step $H^2$ estimate of the numerical solution is carefully performed. This single-step $H^2$ estimate gives a useful upper bound of the numerical solution at the next time step, in the discrete maximum norm, which in turn leads to a theoretical justification of the stabilization parameter value.   Consequently, such a justification ensures the energy dissipation at the next time step. As a result, the mathematical induction can be applied to derive a global-in-time energy estimate. In particular, the derived energy dissipation property is valid at any final time, in comparison with some existing works of local-in-time energy  estimates, which  come from a local-in-time convergence analysis.

The methodology presented in this work will provide a framework of the global-in-time energy stability analysis for a class of higher-order accurate, multi-stage RK-type numerical schemes for various gradient flow models, such as Allen--Cahn/Cahn--Hilliard equation, epitaxial thin film growth, and other related gradient equations with non-quadratic free energy expansion. As long as the energy stability can be  proven under a global Lipschitz condition, a similar theoretical technique can be used to derive a uniform-in-time bound of the numerical solution, in the associated functional norm, which in turn leads to a theoretical justification of the global-in-time energy estimate. The technical details will be reported in the future works.

\appendix
	
\section{Proof of Proposition~\ref{prop:prop 1}}  \label{appen: prop 1}

In the following analysis, we write $\displaystyle \sum_{\ell, m,n}$ to represent $\displaystyle \sum_{\ell, m,n=-K}^K$ for simplicity of notations unless the particular declarations.

To prove inequality \eqref{prop-1-0-5}, we begin with the summation-by-parts formula
\begin{equation}
  \| G^{(0)}_h \nabla_h \Delta_h f \|_2^2 = - \langle G^{(0)}_h  \Delta_h f ,
  G^{(0)}_h \Delta_h^2 f \rangle \le \| G^{(0)}_h  \Delta_h f \|_2 \cdot \| G^{(0)}_h \Delta_h^2 f \|_2 ,
  \label{prop-1-5-1}
\end{equation}
in which the Cauchy inequality has been applied in the last step. Meanwhile, applying the summation-by-parts formula again gives
\begin{equation}
  \| G^{(0)}_h \Delta_h^2 f \|_2^2 = - \langle G^{(0)}_h  \nabla_h \Delta_h f ,
  G^{(0)}_h \nabla_h \Delta_h^2 f \rangle \le \| G^{(0)}_h  \nabla_h \Delta_h f \|_2 \cdot \| G^{(0)}_h \nabla_h \Delta_h^2 f \|_2 .
  \label{prop-1-5-2}
\end{equation}
In turn, a substitution of~\eqref{prop-1-5-2} into \eqref{prop-1-5-1} results in
\begin{equation*}
   \| G^{(0)}_h \nabla_h \Delta_h f \|_2^2
   \le \| G^{(0)}_h  \Delta_h f \|_2 \cdot \| G^{(0)}_h  \nabla_h \Delta_h f \|_2^\frac12   \cdot \| G^{(0)}_h \nabla_h \Delta_h^2 f \|_2^\frac12,
\end{equation*}
so that
\begin{equation*}
    \| G^{(0)}_h \nabla_h \Delta_h f \|_2^\frac32
   \le \| G^{(0)}_h  \Delta_h f \|_2  \cdot \| G^{(0)}_h \nabla_h \Delta_h^2 f \|_2^\frac12 ,
\end{equation*}
which then leads to \eqref{prop-1-0-5}.

In terms of inequality~\eqref{prop-1-0-3}, we make use of the following identity
\begin{equation*}
  \| \Delta_h f \|_2^2  =  L^3 \sum_{\ell, m,n}  \lambda_{\ell,m,n}^2
   \cdot | \hat{f}_{\ell,m,n} |^2 . 
\end{equation*}
An application of Parseval equality to the discrete Fourier expansion of $G^{(5)}_h f$, given by~\eqref{Fourier-7-1}, yields
\begin{equation}
    \| G^{(5)}_h f \|_2^2  = L^3 \sum_{\ell, m,n}  \frac{1 - {\rm e}^{- \tau \Lambda_{\ell,m,n}} }{\tau}   \lambda_{\ell,m,n}^2 \cdot | \hat{f}_{\ell,m,n} |^2 .  \label{prop-1-1-4}
\end{equation}
Then, the following inequality is valid:
\begin{equation*}
    \tau \| G^{(5)}_h f \|_2^2  =
    L^3 \sum_{\ell, m,n} ( 1 - {\rm e}^{- \tau \Lambda_{\ell,m,n}} )
     \lambda_{\ell,m,n}^2 \cdot | \hat{f}_{\ell,m,n} |^2
     \le L^3 \sum_{\ell, m,n}
     \lambda_{\ell,m,n}^2 \cdot | \hat{f}_{\ell,m,n} |^2 ,
\end{equation*}
in which the last step comes from a trivial fact that $1 - {\rm e}^{- \tau \Lambda_{\ell,m,n}} \le 1$. The proof of inequality~\eqref{prop-1-0-3} is completed.

Next, we show \eqref{prop-1-0-1}.
The applications of $G_h L_\kappa$ and $\Delta_h^2$ to the discrete Fourier expansion of $f \in \mathring{\mathcal C}_{\rm per}$, given by~\eqref{Fourier-1}, becomes
\begin{align}
  &
  ( G_h L_\kappa f )_{i,j,k} = \sum_{\ell, m,n} \frac{1
  - {\rm e}^{- \tau \Lambda_{\ell,m.n}} }{\tau \Lambda_{\ell,m,n}}
  \cdot \Lambda_{\ell,m,n}
  \hat{f}_{\ell,m,n} {\rm e}^{2 \pi \mathrm{i} ( \ell x_i + m y_j + n z_k)/L} ,   \label{prop-1-1-1}
\\
  &
  ( \Delta_h^2 f )_{i,j,k} = \sum_{\ell, m,n} \lambda_{\ell,m,n}^2
  \hat{f}_{\ell,m,n} {\rm e}^{2 \pi \mathrm{i} ( \ell x_i + m y_j + n z_k)/L} . \nonumber 
\end{align}
Subsequently, the discrete inner product  turns out to be
\begin{align}
  &
  \langle G_h L_\kappa f , \Delta_h^2 f \rangle = L^3 \sum_{\ell, m,n} \frac{1
  - {\rm e}^{- \tau \Lambda_{\ell,m.n}} }{\tau \Lambda_{\ell,m,n}}
  \cdot \Lambda_{\ell,m,n} \cdot \lambda_{\ell,m,n}^2 \cdot | \hat{f}_{\ell,m,n} |^2 .
   \label{prop-1-1-3}
\end{align}
In turn, a comparison between~\eqref{prop-1-1-3} and \eqref{prop-1-1-4} results in the first equality in~\eqref{prop-1-0-1}. To prove the second inequality in~\eqref{prop-1-0-1}, we begin with the following observation:
\begin{equation*}
\begin{aligned}
  (-1 + \lambda_{\ell,m,n} )^2 + \kappa & = 2 - 2 \lambda_{\ell,m,n} + \frac12 \lambda_{\ell,m,n}^2
  + \frac12 \lambda_{\ell,m,n}^2 + (\kappa -1)
\\
  &
  = \frac12 ( 2 - \lambda_{\ell,m,n} )^2
  + \frac12 \lambda_{\ell,m,n}^2 + (\kappa -1)
  \ge \frac12 \lambda_{\ell,m,n}^2 + (\kappa -1) ,
\end{aligned}
\end{equation*}
so that
\begin{equation*}
\begin{aligned}
&   \Lambda_{\ell, m,n} = \Big( (-1 + \lambda_{\ell,m,n} )^2 + \kappa \Big) \lambda_{\ell,m,n}
   \ge \frac12 \lambda_{\ell,m,n}^3 + (\kappa -1) \lambda_{\ell,m,n} ,
\\
  &
   \Lambda_{\ell, m,n} \cdot \lambda_{\ell,m,n}^2
   \ge \frac12 \lambda_{\ell,m,n}^5 + (\kappa -1) \lambda_{\ell,m,n}^3 ,
\end{aligned}
\end{equation*}
which in turn leads to the following inequality
\begin{align}
    \| G^{(5)}_h f \|_2^2 & =  L^3 \sum_{\ell, m,n}  \frac{1 - {\rm e}^{- \tau \Lambda_{\ell,m,n}} }{\tau \Lambda_{\ell, m,n}}  \Lambda_{\ell, m,n} \cdot  \lambda_{\ell,m,n}^2 \cdot | \hat{f}_{\ell,m,n} |^2  \label{prop-1-1-6}
 \\
 &  \ge
    L^3 \sum_{\ell, m,n} \frac{1 - {\rm e}^{- \tau \Lambda_{\ell,m,n}} }{\tau \Lambda_{\ell, m,n}}  \Big( \frac12 \lambda_{\ell,m,n}^5 + (\kappa -1) \lambda_{\ell,m,n}^3  \Big) | \hat{f}_{\ell,m,n} |^2 . \nonumber
\end{align}

An application of Parseval equality to the discrete Fourier expansion of $G^{(0)}_h \nabla_h \Delta_h^2 f$ and $G^{(0)}_h \nabla_h \Delta_h f$ indicates that
\begin{align}
  &
    \| G^{(0)}_h \nabla_h \Delta_h^2 f \|_2^2  = L^3\sum_{\ell, m,n} \frac{1 - {\rm e}^{- \tau \Lambda_{\ell,m,n}} }{\tau \Lambda_{\ell,m,n}}  \cdot \lambda_{\ell,m,n}^5
    \cdot | \hat{f}_{\ell,m,n} |^2 ,\label{prop-1-1-7}
\\
  &
    \| G^{(0)}_h \nabla_h \Delta_h f \|_2^2  = L^3 \sum_{\ell, m,n}  \frac{1 - {\rm e}^{- \tau \Lambda_{\ell,m,n}} }{\tau \Lambda_{\ell,m,n}}  \cdot \lambda_{\ell,m,n}^3
    \cdot | \hat{f}_{\ell,m,n} |^2 . \label{prop-1-1-8}
\end{align}
Similarly, a comparison between~\eqref{prop-1-1-6}, \eqref{prop-1-1-7}-\eqref{prop-1-1-8} leads to the second inequality of~\eqref{prop-1-0-1}.

To prove \eqref{prop-1-0-2}, we see that the discrete Fourier expansion of $\Delta_h^2 {\rm e}^{- \tau L_\kappa} f $ can be represented as
\begin{equation*}
  ( \Delta_h^2 {\rm e}^{- \tau L_\kappa} f)_{i,j,k} = \sum_{\ell, m,n}  {\rm e}^{- \tau \Lambda_{\ell,m.n}}  \cdot \lambda_{\ell,m,n}^2
  \hat{f}_{\ell,m,n} {\rm e}^{2 \pi \mathrm{i} ( \ell x_i + m y_j + n z_k)/L} ,
\end{equation*}
and its discrete inner product with $G_h L_\kappa  f$, in which the discrete Fourier expansion is given by~\eqref{prop-1-1-1}, turns out to be
\begin{equation}
 \left\langle G_h L_\kappa  f , \Delta_h^2 {\rm e}^{- \tau L_\kappa} f  \right\rangle
 = L^3 \sum_{\ell, m,n} \frac{1
  - {\rm e}^{- \tau \Lambda_{\ell,m.n}} }{\tau \Lambda_{\ell,m,n}}
  \cdot \Lambda_{\ell,m,n} \cdot {\rm e}^{- \tau \Lambda_{\ell,m.n}}  \cdot \lambda_{\ell,m,n}^2 | \hat{f}_{\ell,m,n} |^2 ,   \label{prop-1-2-2}
\end{equation}
On the other hand, by the first equality in~\eqref{prop-1-1-6}, we get
\begin{equation}
    \| G^{(5)}_h {\rm e}^{- \tau L_\kappa} f \|_2^2  =  L^3 \sum_{\ell, m,n}  \frac{1 - {\rm e}^{- \tau \Lambda_{\ell,m,n}} }{\tau \Lambda_{\ell, m,n}}  \Lambda_{\ell, m,n} \cdot  \lambda_{\ell,m,n}^2 \cdot {\rm e}^{- 2 \tau \Lambda_{\ell,m,n}} | \hat{f}_{\ell,m,n} |^2 .
     \label{prop-1-2-3}
\end{equation}
Again, a comparison between \eqref{prop-1-2-2} and \eqref{prop-1-2-3} implies inequality~\eqref{prop-1-0-2}, because of the fact that $| {\rm e}^{- \tau \Lambda_{\ell,m.n}}  | \le 1$.

Inequalities in~\eqref{prop-1-0-4} and \eqref{prop-1-0-6} comes directly from the estimates in Lemma~\ref{lem:lem 1}; the details are skipped for the sake of brevity.

\section{Proof of Proposition~\ref{prop:prop 2}} \label{appen: prop 2}

A discrete Fourier expansion~\eqref{Fourier-1} is assumed for $f \in {\mathcal C}_{\rm per}$, and we set the corresponding expansion for $g \in {\mathcal C}_{\rm per}$ as
\begin{equation*}
  g_{i,j,k} = \sum_{\ell, m,n} \hat{g}_{\ell, m,n} {\rm e}^{2 \pi \mathrm{i} ( \ell x_i + m y_j + n z_k)/L} .
\end{equation*}
Subsequently, the discrete Fourier expansion of $g -  {\rm e}^{-\tau L_\kappa} f$ turns out to be
\begin{equation*}
  ( g -  {\rm e}^{- \tau L_\kappa} f)_{i,j,k} = \sum_{\ell, m,n} ( \hat{g}_{\ell, m,n}
  - {\rm e}^{-\tau \Lambda_{\ell, m,n}} \hat{f}_{\ell, m,n} )
  {\rm e}^{2 \pi \mathrm{i} ( \ell x_i + m y_j + n z_k)/L} . 
\end{equation*}
Of course, its discrete $H^2$ norm becomes
\begin{equation}
  \| \Delta_h ( g -  {\rm e}^{-\tau L_\kappa} f ) \|_2^2
  =  L^3 \sum_{\ell, m,n} \lambda_{\ell, m,n}^2
   \cdot | \hat{g}_{\ell, m,n} - {\rm e}^{-\tau \Lambda_{\ell, m,n}} \hat{f}_{\ell, m,n} |^2 .
   \label{prop-2-1}
\end{equation}
In turn, a combination of the representation formula~\eqref{prop-1-2-2} and \eqref{prop-2-1} yields
\begin{equation*}
\begin{aligned}
  & \quad~
  \tau \left\langle G_h L_\kappa  f , \Delta_h^2 {\rm e}^{-\tau L_\kappa} f  \right\rangle
   + \| \Delta_h ( g -  {\rm e}^{-\tau L_\kappa}  f)  \|_2^2
\\
  & =
  L^3 \sum_{\ell, m,n} \Big(  ( 1 - {\rm e}^{- \tau \Lambda_{\ell, m,n}} )
   {\rm e}^{-\tau \Lambda_{\ell, m,n}}  \lambda_{\ell, m,n}^2   | \hat{f}_{\ell, m,n} |^2
  + \lambda_{\ell, m,n}^2   | \hat{g}_{\ell, m,n} - {\rm e}^{-\tau L_\kappa} \hat{f}_{\ell, m,n} |^2 \Big)
\\
  & =
  L^3 \sum_{\ell, m,n} \lambda_{\ell, m,n}^2 \Big(  ( 1 - {\rm e}^{-\tau \Lambda_{\ell, m,n}} )
   {\rm e}^{\tau \Lambda_{\ell, m,n}}   | {\rm e}^{-\tau \Lambda_{\ell, m,n}} \hat{f}_{\ell, m,n} |^2
    +  | \hat{g}_{\ell, m,n} - {\rm e}^{-\tau L_\kappa} \hat{f}_{\ell, m,n} |^2 \Big)
\\
  & =
  L^3 \sum_{\ell, m,n} \lambda_{\ell, m,n}^2 ( 1 - {\rm e}^{-\tau \Lambda_{\ell, m,n}} )
  \Big(  {\rm e}^{\tau \Lambda_{\ell, m,n}}
    |  {\rm e}^{-\tau \Lambda_{\ell, m,n}} \hat{f}_{\ell, m,n} |^2
\\
  &  \qquad \qquad \qquad \qquad \qquad \qquad \qquad
    +  \frac{1}{1 - {\rm e}^{-\tau \Lambda_{\ell, m,n}}}
     | \hat{g}_{\ell, m,n} - {\rm e}^{-\tau L_\kappa} \hat{f}_{\ell, m,n} |^2 \Big) .
\end{aligned}
\end{equation*}
The following lower bound can be derived for each fixed mode frequency $(\ell, m,n)$:
\begin{equation*}
\begin{aligned}
    {\rm e}^{\tau \Lambda_{\ell, m,n}}
    a^2  +  \frac{1}{1 - {\rm e}^{-\tau \Lambda_{\ell. m,n}}} b^2
   & = a^2 + b^2 + (  {\rm e}^{\tau \Lambda_{\ell, m,n}}  -1 )
    a^2  +  \Big( \frac{1}{1 - {\rm e}^{-\tau \Lambda_{\ell, m,n}}} -1 \Big) b^2
\\
  &
  \ge a^2 + b^2 + 2 ab = ( a + b)^2 ,  \quad \forall \, a , b \ge 0 ,
\end{aligned}
\end{equation*}
where the Cauchy inequality has been applied in the second step. Then we see that
\begin{equation*}
\begin{aligned}
  &
  {\rm e}^{\tau \Lambda_{\ell, m,n}}
    | {\rm e}^{-\tau \Lambda_{\ell, m,n}} \hat{f}_{\ell, m,n} |^2
    +  \frac{1}{1 - {\rm e}^{-\tau \Lambda_{\ell, m,n}}}
     | \hat{g}_{\ell, m,n} - {\rm e}^{-\tau \Lambda_{\ell, m,n}} \hat{f}_{\ell, m,n} |^2
\\
  & \qquad \ge
  \Big( | {\rm e}^{-\tau \Lambda_{\ell, m,n}} \hat{f}_{\ell, m,n} |
  + | \hat{g}_{\ell, m,n} - {\rm e}^{-\tau \Lambda_{\ell, m,n}} \hat{f}_{\ell, m,n} | \Big)^2
  \ge | \hat{g}_{\ell, m,n} |^2 ,
\end{aligned}
\end{equation*}
so that
\begin{equation*}
   \tau \left\langle G_h L_\kappa  f , \Delta_h^2 {\rm e}^{-\tau L_\kappa} f  \right\rangle
   + \| \Delta_h ( g -  {\rm e}^{-\tau L_\kappa}  f ) \|_2^2
  \ge L^3 \sum_{\ell, m,n} \lambda_{\ell, m,n}^2 ( 1 - {\rm e}^{-\tau \Lambda_{\ell, m,n}} )
  | \hat{g}_{\ell, m,n} |^2 .
\end{equation*}
In comparison with the representation formula for $\tau \| G_h^{(5)} g \|_2^2$:
\begin{align*}
    \tau \| G^{(5)}_h g \|_2^2  & = \tau L^3 \sum_{\ell, m,n}  \frac{1 - {\rm e}^{- \tau \Lambda_{\ell, m,n} } }{\tau \Lambda_{\ell, m,n}}  \Lambda_{\ell, m,n} \cdot  \lambda_{\ell, m,n}^2 \cdot | \hat{g}_{\ell, m,n} |^2
\\
  & =
     L^3 \sum_{\ell, m,n} \lambda_{\ell, m,n}^2 ( 1 - {\rm e}^{-\tau \Lambda_{\ell, m,n}} )  | \hat{g}_{\ell, m,n} |^2 ,
\end{align*}
we see that~\eqref{prop-2-0} has been established. This finishes the proof of Proposition~\ref{prop:prop 2}.

\bibliographystyle{amsplain}
\bibliography{draft1.bib}

\end{document}